\newcommand{\D}{\operatorname{\mathbb{D}}}
\newcommand{\N}{\operatorname{\mathbb{N}}}
\newcommand{\C}{\operatorname{\mathbb{C}}}
\newcommand{\e}{\operatorname{\varepsilon}}
\newcommand{\ol}{\overline }
\newcommand{\hil}{\operatorname{\mathcal{H}}}
\newcommand{\kil}{\operatorname{\mathcal{K}}}
\let\phi\varphi
\newtheorem{lemma}{Lemma}[section]
\newtheorem{theorem}[lemma]{Theorem}
\newtheorem{proposition}[lemma]{Proposition}
\newtheorem{corollary}[lemma]{Corollary}
\theoremstyle{definition}
\newtheorem*{question}{Question}
\begin{document}
\author{Rapha\"el Clou\^atre}
\address{Department of Mathematics, Indiana University, 831 East 3rd Street,
Bloomington, IN 47405}
\email{rclouatr@indiana.edu}
\title[Similarity results for operators of class $C_0$]{Similarity results for operators of class $C_0$ and the algebra $H^\infty(T)$}
\subjclass[2010]{Primary 47A45; Secondary 30E05.}
\begin{abstract}
Given two multiplicity-free operators $T_1$ and $T_2$ of class $C_0$ having the same finite Blaschke product as minimal function, the operator algebras $H^\infty(T_1)$ and $H^\infty (T_2)$ are isomorphic and $T_1$ is similar to $T_2$. We find conditions under which the norm of the similarity between the operators can be controlled by the norm of the algebra isomorphism. As an application, we improve upon earlier work  and obtain results regarding similarity when the minimal function is an infinite product of finite Blaschke products satisfying the generalized Carleson condition.
\end{abstract}

\maketitle

\section{Introduction}

One of the main features of $C_0$ contractions  is their classification up to quasisimilarity in terms of the Jordan models. It is a natural impulse to wonder whether something could be said about \textit{similarity} classes, and this is our aim here. In fact, our concern in this paper is that of determining conditions under which we can improve the quasisimilarity of an operator $T$ of class $C_0$ with its Jordan model to similarity. Early results include those of \cite{apostol}, which inspired the work done in \cite{clouatre}. The corresponding question for unitary equivalence was investigated by Arveson in his seminal paper \cite{arveson}. More recently, there has been some interest in this type of question in the setting of truncated Toeplitz operators (\cite{cima}, \cite{garcia}). 

For the class $C_0$, the problem was considered in \cite{clouatre} where motivation was provided and some partial results were obtained. The point of view we would like to adopt here is different from that of \cite{clouatre} in the sense that the basic assumption will be that the algebras $H^\infty(T)$ and $H^\infty (S(\theta))$ are boundedly isomorphic, instead of $\phi(T)$ having closed range for every inner divisor $\phi$ of $\theta$ (here $\theta$ denotes the minimal function of $T$). Theorem \ref{t-equivalencecarleson} below relates those two settings.

\begin{question}
Let $T_1\in B(\hil_1), T_2\in B(\hil_2)$ be multiplicity-free operators of class $C_0$ with the property that the algebras $H^\infty(T_1)$ and $H^\infty(T_2)$ are boundedly isomorphic. Does it follow that $T_1$ and $T_2$ are similar?
\end{question}

We obtain a quantitative answer to this question in the case where the minimal function is a finite Blaschke product. Let us briefly describe our main result.
Let $T\in B(\hil)$ be a multiplicity-free operator of class $C_0$ with minimal function 
$$
\theta=b_{\lambda_1}\ldots b_{\lambda_N}.
$$
Assume that there exists a bounded algebra isomorphism
$$
\Psi:H^\infty(T)\to H^\infty(S(\theta))
$$
such that $\Psi(u(T))=u(S(\theta))$ for every $u\in H^\infty$ and such that $\|\Psi\|$ is close to $1$ (this is made precise in the actual statement, see Corollary \ref{c-simalg}). Then, there exists an invertible operator 
$$
X:\hil\to H(\theta)
$$
such that $XT=S(\theta) X$ and 
$$
\max\{\|X\|,\|X^{-1}\| \}\leq C(\Psi,N)
$$
where $C(\Psi,N)>0$ is a constant depending only on $\Psi$ and $N$. This norm control on the similarity $X$ was previously unknown, and it is our main contribution.

The plan of the paper is as follows. Section 2 deals with preliminaries. Section 3 contains the precise statement of the main result. It also deals with the case where the underlying Hilbert space has dimension two. This case turns out to be particularly nice since some assumptions can be dropped. In Section 4 we recall a concept from interpolation theory and explain how it applies to our purposes. Finally, we apply our main theorem in Section 5 to obtain a similarity result for operators of class $C_0$, extending work done in \cite{clouatre}.

\section{Preliminaries}

We give here some background concerning operators of class $C_0$.
Let $H^\infty$ be the algebra of bounded holomorphic functions on
the open unit disc $\D$. Let $\hil$ be a Hilbert space and $T$ a
bounded linear operator on $\hil$, which we indicate by
$T\in B(\hil)$. If $T\in B(\hil)$ is a completely non-unitary contraction, then its associated Sz.-Nagy--Foias $H^\infty$ functional calculus is an algebra homomorphism $\Phi: H^\infty \to
B(\hil)$ with the following properties:
\begin{enumerate}[(i)]
    \item $\|\Phi(u)\|\leq u$ for every $u\in H^\infty$
    \item $\Phi(p)=p(T)$ for every polynomial $p$
    \item $\Phi$ is continuous when $H^\infty$ and $B(\hil)$ are equipped with their respective weak-star topologies.
\end{enumerate}
We use the notation $\Phi(u)=u(T)$ for $u\in H^\infty$.
The contraction $T$ belongs to the class $C_0$ whenever $\Phi$ has a non-trivial kernel.
It is known in that case that $\ker \Phi=\theta H^\infty$ for some inner function $\theta$ called the minimal
function of $T$, which is uniquely determined up to
a scalar factor of absolute value one. We now give the first elementary result we will use. Recall that a function $u\in H^\infty$ divides another function $v\in H^\infty$ if $v=uf$ for some $f\in H^\infty$. Moreover, given $E\subset \hil$, we denote by $\bigvee E$ the smallest closed subspace containing $E$. 

\begin{lemma}[\cite{bercOTA} Theorem 2.4.6]\label{l-lcmspan}
Let $T\in B(\hil)$ be an operator of class $C_0$ with minimal function $\theta$. Given a family $\{\theta_n\}_n$ of inner divisors of $\theta$ whose least common inner multiple is $\theta$, we have
$$
\hil=\bigvee_{n}\ker \theta_n(T).
$$
\end{lemma}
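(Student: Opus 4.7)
The plan is to show that $\M=\bigvee_n\ker\theta_n(T)$ coincides with $\hil$ by controlling the minimal function of the compression of $T$ to the quotient $\hil/\M$.

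Each $\ker\theta_n(T)$ is $T$-invariant, so $\M$ is $T$-invariant, and the induced $C_0$ operator $T/\M$ on $\hil/\M$ has some minimal function $\theta_0$ dividing $\theta$. For each $n$, the factorization $\theta=\theta_n\cdot(\theta/\theta_n)$ together with $\theta(T)=0$ yields $\theta_n(T)(\theta/\theta_n)(T)=0$, so $(\theta/\theta_n)(T)\hil\subseteq\ker\theta_n(T)\subseteq\M$. Passing to the quotient, $(\theta/\theta_n)(T/\M)=0$, and so $\theta_0$ divides $\theta/\theta_n$ for every $n$; consequently $\theta_0$ divides the inner greatest common divisor of the family $\{\theta/\theta_n\}$.

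The crux is then the lattice-theoretic fact that this greatest common divisor is constant, which is exactly the complementary formulation of the LCM hypothesis. Indeed, if an inner function $u$ divides every $\theta/\theta_n$, then every $\theta_n$ divides $\theta/u$; since the least common multiple of $\{\theta_n\}$ equals $\theta$ by assumption, $\theta$ itself divides $\theta/u$, forcing $u$ to be constant. Hence $\theta_0$ is constant, which (since the minimal function of a nonzero $C_0$ operator is nonconstant) is possible only when $\hil/\M=\{0\}$, i.e., $\M=\hil$. The main point to watch in the argument is the translation of the LCM condition on $\{\theta_n\}$ into the GCD condition on the complementary divisors $\{\theta/\theta_n\}$; once this is in hand, the conclusion is immediate from the behavior of the minimal function under passing to quotients.
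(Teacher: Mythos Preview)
Your argument is correct. Note, however, that the paper does not supply its own proof of this lemma: it is quoted as Theorem~2.4.6 from Bercovici's monograph and left unproved in the text, so there is no in-paper proof to compare against. That said, the route you take---passing to the compression of $T$ to $\hil\ominus\M$, observing that its minimal function must divide every $\theta/\theta_n$, and then translating the LCM hypothesis on $\{\theta_n\}$ into the statement that the inner GCD of $\{\theta/\theta_n\}$ is constant---is exactly the standard proof one finds in the cited reference. The only implicit facts you use (that the compression to the orthocomplement of an invariant subspace is again of class $C_0$ with minimal function dividing $\theta$, and that the $H^\infty$ functional calculus commutes with such compressions) are basic properties of $C_0$ operators and semi-invariant subspaces, so nothing is missing.
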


We denote by $H^2$ the Hilbert space of functions
$$f(z)=\sum_{n=0}^\infty a_n z^n$$
holomorphic on the open unit disc, equipped with the norm
$$
\|f\|_{H^2}^2=\sum_{n=0}^\infty |a_n|^2.
$$
For any inner function $\theta\in H^\infty$, the space $H(\theta)=H^2\ominus \theta H^2$ is closed and invariant for $S^*$, the adjoint of the shift operator $S$ on $H^2$. The operator $S(\theta)$ defined by $S(\theta)^*=S^*|(H^2\ominus \theta H^2)$ is called a Jordan block; it is of class $C_0$ with minimal function $\theta$. Given $u\in H^\infty$, we have that
$$
\|u(S(\theta))\|=\|u\|_{H^\infty/\theta H^\infty}=\inf\{\|u+\theta f\|_{H^\infty}:f\in H^\infty \}.
$$
The following is another useful property of Jordan blocks.

\begin{lemma}[\cite{bercOTA} Proposition 3.1.10]\label{l-unitequiv}
Let $\phi$ be an inner divisor of the inner function $\theta$. Then, the operator $S(\theta)|\ker \phi(S(\theta))$ is unitarily equivalent to $S(\phi)$.
\end{lemma}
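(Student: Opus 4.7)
The plan is to realize $\ker\phi(S(\theta))$ explicitly as a subspace of $H(\theta)$ related to multiplication by an inner function, and then exhibit the unitary intertwiner directly. First I would observe that, because $\theta H^2$ is invariant under the shift $S$, the block $S(\theta)$ is the compression $P_{H(\theta)}S|H(\theta)$, and the matrix of $S$ with respect to $H^2=H(\theta)\oplus\theta H^2$ is upper triangular with $S(\theta)$ in the $(1,1)$ entry. Hence for every polynomial $p$ and $f\in H(\theta)$ we have $p(S(\theta))f=P_{H(\theta)}(p\cdot f)$, and by weak-star continuity of the Sz.-Nagy--Foias calculus this extends to $\phi(S(\theta))f=P_{H(\theta)}(\phi f)$ for every $f\in H(\theta)$.

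Next I would use this formula to compute the kernel. Write $\theta=\phi\psi$ with $\psi\in H^\infty$ inner. Then $f\in H(\theta)$ lies in $\ker\phi(S(\theta))$ if and only if $\phi f\in\theta H^2=\phi\psi H^2$, which (since multiplication by an inner function is isometric) happens if and only if $f\in\psi H^2$. Thus
$$
\ker\phi(S(\theta))=\psi H^2\cap H(\theta).
$$
I would then check that this intersection is exactly $\psi H(\phi)$: an element $\psi g$ with $g\in H^2$ is orthogonal to $\theta H^2=\psi(\phi H^2)$ if and only if $g\perp\phi H^2$, i.e.\ $g\in H(\phi)$. This gives $\ker\phi(S(\theta))=\psi H(\phi)$.

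With this identification in hand, the candidate for the unitary equivalence is the map $U:H(\phi)\to\psi H(\phi)$ defined by $Ug=\psi g$, which is an isometry of $H(\phi)$ onto $\ker\phi(S(\theta))$ since $\psi$ is inner. It remains to verify the intertwining relation $S(\theta)U=US(\phi)$. Given $g\in H(\phi)$, write $zg=S(\phi)g+\phi h$ for some $h\in H^2$ (by the analogous formula $S(\phi)g=P_{H(\phi)}(zg)$). Multiplying through by $\psi$ gives $z\psi g=\psi S(\phi)g+\theta h$. Since $\psi S(\phi)g\in\psi H(\phi)\subset H(\theta)$ and $\theta h\in\theta H^2=H(\theta)^\perp$, the projection onto $H(\theta)$ yields $S(\theta)(\psi g)=\psi S(\phi)g$, which is the desired intertwining.

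The only real subtlety I expect is the identification of $\phi(S(\theta))$ with the compression $P_{H(\theta)}M_\phi|H(\theta)$; once that formula is in place (via the upper-triangular block structure of $S$ and weak-star continuity of the functional calculus), every remaining step reduces to straightforward manipulations with inner functions and orthogonality.
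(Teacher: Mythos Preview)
Your argument is correct and is essentially the standard proof of this fact. The paper does not give its own proof of this lemma; it simply cites Proposition 3.1.10 in Bercovici's book, and the proof there follows the same route you outline: identify $\ker\phi(S(\theta))$ with $\psi H(\phi)$ (where $\theta=\phi\psi$) and use multiplication by $\psi$ as the unitary intertwiner.

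One cosmetic slip: with the ordering $H^2=H(\theta)\oplus\theta H^2$ and the invariance of $\theta H^2$ under $S$, the block matrix of $S$ is lower triangular rather than upper triangular. This does not affect anything, since for a triangular decomposition of either kind the $(1,1)$ block of $p(S)$ is $p(S(\theta))$, which is all you need to conclude $\phi(S(\theta))f=P_{H(\theta)}(\phi f)$.
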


A vector $x\in \hil$ is said to be cyclic for $T\in B(\hil)$ if 
$$
\bigvee \{T^n x: n\geq 0\}=\hil.
$$ 
An operator having a cyclic vector is said to be multiplicity-free. 

\begin{theorem}[\cite{bercOTA} Theorem 2.3.6]\label{maxvector}
Let $T\in B(\hil)$ be a multiplicity-free operator of class $C_0$. Then, the set of
cyclic vectors for $T$ is a dense $G_\delta$ in $\hil$. 
\end{theorem}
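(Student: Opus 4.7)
The plan is to establish separately that $\Cgot$, the set of cyclic vectors for $T$, is $G_\delta$ and that it is dense in $\hil$. For the $G_\delta$ property, I fix a countable dense subset $\{y_n\}_{n \in \N}$ of $\hil$ and, for each pair $(n,k)\in \N^2$, introduce
$$
U_{n,k}=\{x\in \hil:\exists\, u\in H^\infty \text{ with } \|u(T)x-y_n\|<1/k\}.
$$
Each $U_{n,k}$ is open, since continuity of the fixed operator $u(T)$ allows any witnessing $u$ to serve on an entire ball around its witness. A standard density argument then yields $\Cgot=\bigcap_{n,k}U_{n,k}$.

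For density, the multiplicity-free hypothesis furnishes a cyclic vector $x_0\in \hil$. The key characterization, valid for multiplicity-free $T$ of class $C_0$, is that a vector $y$ is cyclic if and only if its local minimal function $\theta_y$ (the generator of $\{u\in H^\infty:u(T)y=0\}$) equals $\theta$; this rests on the parameterization of the invariant subspaces of multiplicity-free $C_0$ operators by inner divisors of $\theta$, so that an invariant subspace on which $T$ has minimal function $\theta$ must exhaust $\hil$. Applied to a vector of the form $y=w(T)x_0$ with $w\in H^\infty$, a direct computation using $\theta_{x_0}=\theta$ gives $\theta_y=\theta/\gcd(w,\theta)$, so $w(T)x_0$ is cyclic precisely when $\gcd(w,\theta)=1$.

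Given $z\in \hil$ and $\e>0$, cyclicity of $x_0$ produces $v\in H^\infty$ with $\|v(T)x_0-z\|<\e/2$, and I then look for a small $\delta\in \C$ such that $\gcd(v+\delta,\theta)=1$; for such a $\delta$, the vector $(v+\delta)(T)x_0=v(T)x_0+\delta x_0$ is simultaneously cyclic and within $\e$ of $z$, as desired. The bad $\delta$'s split into two sources: (i) $\delta=-v(\mu)$ as $\mu$ ranges over the (countably many) zeros of $\theta$ in $\D$, each forcing $b_\mu\mid \gcd(v+\delta,\theta)$; and (ii) $\delta$'s for which $v+\delta$ shares a nontrivial singular inner factor with $\theta$. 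Source (i) contributes only a countable set of exceptional values and is immediately harmless. The main obstacle is source (ii), which I would handle via a Frostman-type result: the set of $\delta\in \C$ for which $v+\delta$ acquires any nontrivial singular inner factor is of logarithmic capacity zero (a handy rigidity observation is that if $s_\nu$ is a nontrivial singular inner function dividing both $v+\delta_1$ and $v+\delta_2$, then $s_\nu$ divides the constant $\delta_1-\delta_2$, forcing $\delta_1=\delta_2$, so for each such $s_\nu$ at most one $\delta$ is bad). Consequently the total bad set has empty interior in $\C$, and $\delta$ may be chosen of arbitrarily small modulus outside it, completing the density proof.
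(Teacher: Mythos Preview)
The paper does not supply a proof of this statement; it is simply quoted from \cite{bercOTA} (Theorem 2.3.6) and used as a tool in later arguments. There is thus no proof in the paper to compare your attempt against.

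On its own merits, your outline is essentially correct. The $G_\delta$ argument is standard and valid. For density, the perturbation $v(T)x_0 \mapsto (v+\delta)(T)x_0$ together with the identity $\theta_{w(T)x_0} = \theta/\gcd(w,\theta)$ (for $x_0$ cyclic) is a legitimate route. The Blaschke obstruction (i) is indeed handled by a countable exclusion. For the singular obstruction (ii), note that your parenthetical remark---each fixed nontrivial singular inner divisor $s_\nu$ of $\theta$ excludes at most one value of $\delta$---is true but by itself insufficient, since $\theta$ may have uncountably many singular inner divisors; the real work is done by the Frostman-type theorem you invoke (for $v$ of bounded characteristic, the set of $\delta\in\C$ for which $v+\delta$ acquires a nontrivial singular inner factor has logarithmic capacity zero, hence empty interior). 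With that result cited precisely, the argument goes through.
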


A bounded linear operator $X:\hil\to \hil'$ is called a quasiaffinity if it is injective and has dense range. The following result is the classification theorem mentioned in the introduction. Its conclusion is summarized by saying that $T$ is \textit{quasisimilar} to $S(\theta)$. Note that it is not stated here in its full generality, however this simpler version will suffice since we will only deal with multiplicity-free operators.

\begin{theorem}[\cite{bercOTA} Theorem 3.2.3]\label{classifC0}
Let $T\in B(\hil)$ be a multiplicity-free operator of class $C_0$ with minimal function $\theta$. Then, there exist quasiaffinities $X: \hil\to H(\theta)$
and $Y:H(\theta)\to \hil$ with the property that $XT=S(\theta)X$ and $TY=YS(\theta).$
\end{theorem}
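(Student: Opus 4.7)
The plan is to construct $X$ and $Y$ in parallel, each via the $H^\infty$-functional calculus applied to a cyclic vector, and to extend them by continuity. First, I would invoke Theorem~\ref{maxvector} to select a cyclic vector $x \in \hil$ for $T$, and take $e := P_\theta 1 \in H(\theta)$ as a cyclic vector for $S(\theta)$; the latter follows from $S(\theta)^n e = P_\theta z^n$ together with the density of $\{P_\theta z^n : n \ge 0\}$ in $H(\theta)$. On the dense subspace $\{u(S(\theta))e : u \in H^\infty\}\subset H(\theta)$ I would then set
\[
Y_0\bigl(u(S(\theta))e\bigr) := u(T)x,
\]
and symmetrically $X_0(u(T)x) := u(S(\theta))e$ on the dense subspace $\{u(T)x : u \in H^\infty\}\subset\hil$. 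Well-definedness of $Y_0$ reduces to the chain $u(S(\theta))e = 0 \Longrightarrow u(S(\theta)) = 0$ (by cyclicity of $e$) $\Longrightarrow \theta \mid u \Longrightarrow u(T) = 0$, the last step because $\theta$ is also the minimal function of $T$; symmetrically for $X_0$. By construction both maps intertwine $T$ and $S(\theta)$ on their dense domains.

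The main obstacle is boundedness: the functional calculi only give $\|u(T)x\| \le \|u\|_{H^\infty/\theta H^\infty}\|x\|$ and $\|u(S(\theta))e\| \le \|u\|_{H^\infty/\theta H^\infty}\|e\|$, which do not directly compare $\|u(T)x\|$ with $\|u(S(\theta))e\|_{H^2}$. My intended route is the Sz.-Nagy--Foias commutant lifting theorem: the $H^\infty/\theta H^\infty$-module structures on $\hil$ and $H(\theta)$ generated by $x$ and $e$ lift through the minimal isometric dilations of $T$ and $S(\theta)$ to bounded module morphisms, and these compress back down to the required bounded intertwiners $X$ and $Y$.

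Once boundedness is secured, the intertwining relations $XT = S(\theta)X$ and $TY = YS(\theta)$ pass from the dense subspaces to the whole spaces, and density of the ranges is immediate from the construction. For injectivity of $Y$, note that $\ker Y$ is $S(\theta)$-invariant, so by the Beurling-type description of the invariant-subspace lattice of $S(\theta)$ one has $\ker Y = \phi H(\theta/\phi)$ for some inner divisor $\phi$ of $\theta$. Since $(\theta/\phi)(S(\theta))\phi(S(\theta)) = \theta(S(\theta)) = 0$, the range of $\phi(S(\theta))$ lies in $\ker(\theta/\phi)(S(\theta)) = \phi H(\theta/\phi) = \ker Y$, so $\phi(T)Y = Y\phi(S(\theta)) = 0$; density of the range of $Y$ forces $\phi(T) = 0$, hence $\theta \mid \phi$. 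Combined with $\phi \mid \theta$ this yields $\phi = \theta$ and $\ker Y = \theta H(1) = \{0\}$. A symmetric argument handles $X$.
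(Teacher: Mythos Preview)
The paper itself does not prove this statement; it is quoted from \cite{bercOTA}. Your outline has the right shape but contains a genuine gap at the boundedness step. The commutant lifting theorem takes a \emph{bounded} intertwiner and lifts it to the dilation spaces; it does not show that a densely defined intertwiner extends boundedly. In fact your map $Y_0$ need not be bounded for an arbitrary cyclic vector. Take $T=S(\theta)$ with $\theta$ an interpolating Blaschke product with zeros $\lambda_n=1-2^{-n}$, set $e_n=\kappa_{\lambda_n}/\|\kappa_{\lambda_n}\|$, and let $x=\sum_n n^{-1}e_n$. Since $S(\theta)^*e_n=\overline{\lambda_n}\,e_n$, one computes
\[
\|Y_0^*e_n\|_{H^2}^2=\sum_{m\ge 0}|\langle e_n,T^m x\rangle|^2=\frac{|\langle e_n,x\rangle|^2}{1-|\lambda_n|^2}\gtrsim \frac{2^{\,n}}{n^{2}}\longrightarrow\infty,
\]
so $Y_0$ is unbounded even though $x$ is cyclic. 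Thus ``pick any cyclic vector and hope the associated module map is bounded'' cannot succeed; one must \emph{construct} a bounded intertwiner directly. The standard route (as in \cite{bercOTA}) uses the $C_{00}$ model: the map $h\mapsto \sum_{n\ge 0}(D_{T^*}T^{*n}h)z^n$ is an isometry $\hil\to H^2(\mathcal D_{T^*})$ intertwining $T^*$ with the backward shift. Its adjoint, paired with any $d\in\mathcal D_{T^*}$, yields a \emph{contraction} $Y_d:H(\theta)\to\hil$ with $TY_d=Y_dS(\theta)$; one then chooses $d$ so that $Y_d(P_\theta 1)$ is cyclic (Theorem~\ref{maxvector}), giving dense range, and your Beurling-type argument handles injectivity.

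A second, smaller issue: the ``symmetric argument'' for injectivity of $X$ tacitly uses a description of the $T$-invariant subspaces analogous to the one for $S(\theta)$. In \cite{bercOTA} that description is proved \emph{after} the present theorem, so invoking it here is circular. A clean fix: once $Y$ is a quasiaffinity, $YX$ commutes with $T$, and for multiplicity-free $C_0$ operators one has $\{T\}'=H^\infty(T)$; hence $YX=w(T)$. As $YX$ has dense range, $w$ is coprime to $\theta$, whence $w(T)$ is injective, and therefore so is $X$.
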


More details about all of the above background material can be found in \cite{bercOTA}.
Let us close this section by setting some notation that will be used throughout the paper. For $\lambda\in \D$ we set
$$
b_{\lambda}(z)=\frac{z-\lambda}{1-\overline{\lambda}z}
$$
and we denote by 
$$
\kappa_{\lambda}(z)=\frac{1}{1-\ol{\lambda}z}
$$
the reproducing kernel for $H^2$ at $\lambda\in \D$. Also set 
$$
e_{\lambda}=\kappa_{\lambda}/\|\kappa_{\lambda}\|_{H^2}^2.
$$
If $\theta$ is a Blaschke product vanishing at $\lambda$, then it is easily verified that 
$$
e_{\lambda}=P_{H(b_{\lambda})}1\in H(\theta).
$$

\section{Isomorphisms of the algebra $H^\infty(T)$}

Let us start by recording a few elementary computational facts.
\begin{lemma}\label{lemmacompkernel}
Let $\theta=b_{\lambda_1}\ldots b_{\lambda_N}$ and set $\psi_j=\theta/b_{\lambda_j}$ for each $1\leq j \leq N$. Then, for every $1\leq j,k\leq N$ we have
\begin{enumerate}
\item[\rm{(i)}] $$\langle b_{\lambda_j},b_{\lambda_k}\rangle_{H^2}
=1+\ol{\lambda_j}\frac{\lambda_k-\lambda_j}{1-\ol{\lambda_j}\lambda_k}+\ol{\lambda_k}\frac{\lambda_j-\lambda_k}{1-\ol{\lambda_j}\lambda_k}$$
\item[\rm{(ii)}]$$\|b_{\lambda_j}-b_{\lambda_k}\|_{H^2}\leq 2|b_{\lambda_j}(\lambda_k)|^{1/2}$$
\item[\rm{(iii)}]$$\|\psi_j e_{\lambda_j}-\psi_k e_{\lambda_k}\|_{H^2}\leq 4|b_{\lambda_j}(\lambda_k)|^{1/2}$$
\item[\rm{(iv)}] $$\|\kappa_{\lambda_j}-\kappa_{\lambda_k}\|_{H^2}\leq |b_{\lambda_j}(\lambda_k)|^{1/2}\left(\frac{1}{1-|\lambda_j|^2}+\frac{1}{1-|\lambda_k|^2} \right)^{1/2}.$$
\end{enumerate}
\end{lemma}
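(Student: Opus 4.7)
All four inequalities are explicit $H^2$ computations that rely on only two ingredients: the reproducing identity $\langle f,\kappa_\mu\rangle=f(\mu)$, and the algebraic rewriting
\[
\bar\lambda\,b_\lambda(z)=-1+(1-|\lambda|^2)\kappa_\lambda(z),
\]
which expresses a Blaschke factor as an affine combination of $1$ and $\kappa_\lambda$. My plan is simply to do the four items in order, so that each can exploit its predecessor.

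For (i), I would substitute the above representation of $b_{\lambda_j}$ and $b_{\lambda_k}$ into $\langle b_{\lambda_j},b_{\lambda_k}\rangle$ and expand the four resulting terms using $\langle 1,1\rangle=1$, $\langle 1,\kappa_\lambda\rangle=1$, and $\langle\kappa_{\lambda_j},\kappa_{\lambda_k}\rangle=(1-\bar{\lambda_j}\lambda_k)^{-1}$; routine algebra then puts the answer in the stated form (the case $\lambda=0$ being checked separately). Once (i) is in hand, (ii) is immediate: since $\|b_\lambda\|_{H^2}=1$, we have $\|b_{\lambda_j}-b_{\lambda_k}\|^2=2-2\Re\langle b_{\lambda_j},b_{\lambda_k}\rangle$, the two nontrivial summands from (i) combine to $(\bar{\lambda_j}-\bar{\lambda_k})\,b_{\lambda_j}(\lambda_k)$, and the trivial bound $|\bar{\lambda_j}-\bar{\lambda_k}|\le 2$ closes the estimate.

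The main step is (iii), and I expect the main obstacle to be finding a workable expression for $\psi_je_{\lambda_j}-\psi_ke_{\lambda_k}$. My starting point will be the closed form
\[
\psi_j e_{\lambda_j}(z)=(1-|\lambda_j|^2)\,\frac{\theta(z)}{z-\lambda_j},
\]
which follows from $b_{\lambda_j}(z)(1-\bar{\lambda_j}z)=z-\lambda_j$ and the definition of $e_{\lambda_j}$. Using the algebraic identity $(1-|\lambda|^2)/(z-\lambda)=1/b_\lambda(z)+\bar\lambda$, this rewrites the difference as
\[
\psi_j e_{\lambda_j}-\psi_k e_{\lambda_k}=(\psi_j-\psi_k)+(\bar{\lambda_j}-\bar{\lambda_k})\theta.
\]
On the unit circle, $\bar\psi_j=b_{\lambda_j}/\theta$, which yields simultaneously $|\psi_j-\psi_k|=|b_{\lambda_j}-b_{\lambda_k}|$ pointwise (so equal $H^2$ norms) and $\langle\psi_j-\psi_k,\theta\rangle_{H^2}=\bar{\lambda_k}-\bar{\lambda_j}$. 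Expanding $\|(\psi_j-\psi_k)+(\bar{\lambda_j}-\bar{\lambda_k})\theta\|^2$, the cross term exactly cancels the $|\lambda_j-\lambda_k|^2$ term, leaving $\|\psi_je_{\lambda_j}-\psi_k e_{\lambda_k}\|^2\le\|b_{\lambda_j}-b_{\lambda_k}\|^2$, and (ii) finishes the job (with room to spare).

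Finally, (iv) is one more expansion. Applying the reproducing identity,
\[
\|\kappa_{\lambda_j}-\kappa_{\lambda_k}\|^2=\bigl(\kappa_{\lambda_j}(\lambda_j)-\kappa_{\lambda_j}(\lambda_k)\bigr)+\bigl(\kappa_{\lambda_k}(\lambda_k)-\kappa_{\lambda_k}(\lambda_j)\bigr),
\]
and each difference simplifies through
\[
\frac{1}{1-|\lambda|^2}-\frac{1}{1-\bar\lambda\mu}=\frac{\bar\lambda(\lambda-\mu)}{(1-|\lambda|^2)(1-\bar\lambda\mu)},
\]
whose modulus is at most $|b_\lambda(\mu)|/(1-|\lambda|^2)$. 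Summing the two terms and taking a square root gives the advertised bound. Of the four items, the only one deserving genuine thought is the cross-term cancellation in (iii); everything else is careful bookkeeping.
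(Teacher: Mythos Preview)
Your argument is correct and follows the same skeleton as the paper: the same reproducing-kernel computations for (i), (ii), (iv), and the same key identity
\[
\psi_j e_{\lambda_j}-\psi_k e_{\lambda_k}=(\psi_j-\psi_k)+(\bar\lambda_j-\bar\lambda_k)\theta
\]
for (iii). The one substantive difference is that for (iii) the paper simply applies the triangle inequality to this decomposition and then bounds $|\lambda_j-\lambda_k|$ by $2|b_{\lambda_j}(\lambda_k)|$, arriving at the constant $4$. You instead expand the square and compute the cross term $\langle\psi_j-\psi_k,\theta\rangle=\bar\lambda_k-\bar\lambda_j$ exactly, obtaining
\[
\|\psi_j e_{\lambda_j}-\psi_k e_{\lambda_k}\|_{H^2}^2=\|\psi_j-\psi_k\|_{H^2}^2-|\lambda_j-\lambda_k|^2\le\|b_{\lambda_j}-b_{\lambda_k}\|_{H^2}^2,
\]
which gives the sharper constant $2$. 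This is a genuine (if modest) improvement over the paper's bound, obtained at essentially no extra cost; otherwise the two proofs coincide.
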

\begin{proof}
We note first that 
$
b_{\lambda_j}(z)=(z-\lambda_j)\kappa_{\lambda_j}(z)
$
so that $b_{\lambda_j}=(S-\lambda_j)\kappa_{\lambda_j}$. Using the fact that $S^* \kappa_{\lambda}=\ol{\lambda}\kappa_{\lambda}$, we compute
\begin{align*}
\langle b_{\lambda_j},b_{\lambda_k}\rangle_{H^2} &=\langle (S-\lambda_j)\kappa_{\lambda_j},(S-\lambda_k)\kappa_{\lambda_k}\rangle_{H^2}\\
&=(1+\ol{\lambda_k}\lambda_j-|\lambda_j|^2-|\lambda_k|^2)\langle\kappa_{\lambda_j},\kappa_{\lambda_k}\rangle_{H^2} \\
&=\frac{1+\ol{\lambda_k}\lambda_j-|\lambda_j|^2-|\lambda_k|^2}{1-\ol{\lambda_j}\lambda_k}\\
&=1+\ol{\lambda_j}\frac{\lambda_k-\lambda_j}{1-\ol{\lambda_j}\lambda_k}+\ol{\lambda_k}\frac{\lambda_j-\lambda_k}{1-\ol{\lambda_j}\lambda_k}
\end{align*}
which gives (i). In particular, we find
$$
|1-\langle b_{\lambda_j},b_{\lambda_k}\rangle_{H^2}|\leq 2|b_{\lambda_j}(\lambda_k)|.
$$
But then
\begin{align*}
\|b_{\lambda_j}-b_{\lambda_k}\|_{H^2}^2&=\|b_{\lambda_j}\|_{H^2}^2+\|b_{\lambda_k}\|_{H^2}^2-\langle b_{\lambda_j},b_{\lambda_k}\rangle_{H^2} -\langle b_{\lambda_k},b_{\lambda_j}\rangle_{H^2} \\
&\leq 2|1-\langle b_{\lambda_j},b_{\lambda_k}\rangle_{H^2}|\\
&\leq 4|b_{\lambda_j}(\lambda_k)|
\end{align*}
which is (ii). For (iii), we note that
$$
e_{\lambda_j}=1+\ol{\lambda_j}b_{\lambda_j}
$$
and thus
$$
\psi_j e_{\lambda_j}-\psi_k e_{\lambda_k}=(\psi_j-\psi_k)+\ol{\lambda_j}\psi_j b_{\lambda_j}-\ol{\lambda_k}\psi_k b_{\lambda_k}=(\psi_j-\psi_k)+(\ol{\lambda_j}-\ol{\lambda_k})\theta.
$$
We find
\begin{align*}
\|\psi_j e_{\lambda_j}-\psi_k e_{\lambda_k}\|_{H^2}&\leq \|\psi_j-\psi_k\|_{H^2}+|\lambda_j-\lambda_k|\\
&=\|b_{\lambda_k}-b_{\lambda_j}\|_{H^2}+|\lambda_j-\lambda_k|.
\end{align*}
Note now that $|b_{\lambda_j}(\lambda_k)|\geq |\lambda_j-\lambda_k|/2$, so that by using (ii) we may write
$$
\|\psi_j e_{\lambda_j}-\psi_k e_{\lambda_k}\|_{H^2}\leq 2|b_{\lambda_j}(\lambda_k)|^{1/2}+2|b_{\lambda_j}(\lambda_k)|\leq 4|b_{\lambda_j}(\lambda_k)|^{1/2}
$$
and (iii) is established. Finally, we have
\begin{align*}
\|\kappa_{\lambda_j}-\kappa_{\lambda_k}\|_{H^2}^2&=\|\kappa_{\lambda_j}\|_{H^2}^2+\|\kappa_{\lambda_k}\|_{H^2}^2-\langle \kappa_{\lambda_j},\kappa_{\lambda_k}\rangle_{H^2} -\langle \kappa_{\lambda_k},\kappa_{\lambda_j}\rangle_{H^2}\\
&=\frac{1}{1-|\lambda_j|^2}+\frac{1}{1-|\lambda_k|^2}-\frac{1}{1-\ol{\lambda_j}\lambda_k}-\frac{1}{1-\ol{\lambda_k}\lambda_j}\\
&=\frac{\ol{\lambda_j}(\lambda_j-\lambda_k)}{(1-|\lambda_j|^2)(1-\ol{\lambda_j}\lambda_k)}+\frac{\ol{\lambda_k}(\lambda_k-\lambda_j)}{(1-|\lambda_k|^2)(1-\ol{\lambda_k}\lambda_j)}\\
&\leq |b_{\lambda_j}(\lambda_k)|\left(\frac{1}{1-|\lambda_j|^2}+\frac{1}{1-|\lambda_k|^2} \right)\\
\end{align*}
whence (iv) follows.
\end{proof}

Using these computations we can now establish an estimate that will be of use later.

\begin{lemma}\label{l-psi}
Let $\theta=b_{\lambda_1}\ldots b_{\lambda_N}$ and  set $\psi_j=\theta/b_{\lambda_j}$ for each $1\leq j \leq N$. Then, for every $1\leq j,k\leq N$ we have
$$
\|\psi_j-\psi_k \|_{H^\infty/\theta H^\infty}\leq \frac{5\sqrt{2}|b_{\lambda_j}(\lambda_k)|^{1/2}}{(1-\max \{|\lambda_j|,|\lambda_k| \}^2)^{1/2}}.
$$
\end{lemma}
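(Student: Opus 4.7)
The plan is to translate the quotient norm into an operator norm via the identity $\|\psi_j-\psi_k\|_{H^\infty/\theta H^\infty}=\|(\psi_j-\psi_k)(S(\theta))\|$ and then exploit the fact that each operator $\psi_j(S(\theta))$ is of rank one. Specifically, since $\psi_j$ is an inner divisor of $\theta$ with $\theta=b_{\lambda_j}\psi_j$, the kernel of $\psi_j(S(\theta))$ coincides with $H(\theta)\ominus \psi_j H(b_{\lambda_j})$, which is $(N-1)$-dimensional, so the range is one-dimensional and spanned by the vector $\psi_j(S(\theta))e_{\lambda_j}=\psi_j e_{\lambda_j}$. Using the reproducing kernel property $\langle x,\kappa_{\lambda_j}\rangle=x(\lambda_j)$ valid for all $x\in H(\theta)\subset H^2$, together with the identity $e_{\lambda_j}=(1-|\lambda_j|^2)\kappa_{\lambda_j}$, a short computation matches constants and yields the compact formula
$$
\psi_j(S(\theta))x=x(\lambda_j)\,\psi_j e_{\lambda_j},\qquad x\in H(\theta).
$$

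Once this rank-one representation is available, I would telescope:
$$
(\psi_j-\psi_k)(S(\theta))x = x(\lambda_j)\bigl[\psi_j e_{\lambda_j}-\psi_k e_{\lambda_k}\bigr] + \bigl[x(\lambda_j)-x(\lambda_k)\bigr]\psi_k e_{\lambda_k}.
$$
Rewriting $x(\lambda_j)=\langle x,\kappa_{\lambda_j}\rangle$ and $x(\lambda_j)-x(\lambda_k)=\langle x,\kappa_{\lambda_j}-\kappa_{\lambda_k}\rangle$ and applying Cauchy--Schwarz gives
$$
\|(\psi_j-\psi_k)(S(\theta))\|\leq \|\kappa_{\lambda_j}\|\,\|\psi_j e_{\lambda_j}-\psi_k e_{\lambda_k}\| + \|\kappa_{\lambda_j}-\kappa_{\lambda_k}\|\,\|\psi_k e_{\lambda_k}\|.
$$

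The final step is to feed in the estimates already available. Lemma \ref{lemmacompkernel}(iii) controls the first difference by $4|b_{\lambda_j}(\lambda_k)|^{1/2}$ and Lemma \ref{lemmacompkernel}(iv) controls the second by $|b_{\lambda_j}(\lambda_k)|^{1/2}\bigl((1-|\lambda_j|^2)^{-1}+(1-|\lambda_k|^2)^{-1}\bigr)^{1/2}$. The remaining norms are explicit: $\|\kappa_{\lambda_j}\|=(1-|\lambda_j|^2)^{-1/2}$, and since $\psi_k$ is inner (hence $M_{\psi_k}$ is an isometry) one has $\|\psi_k e_{\lambda_k}\|=\|e_{\lambda_k}\|=(1-|\lambda_k|^2)^{1/2}$. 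Setting $m=\max\{|\lambda_j|,|\lambda_k|\}$ and bounding each occurrence of $1-|\lambda_j|^2$ or $1-|\lambda_k|^2$ from below by $1-m^2$ (and each occurrence of $1-|\lambda_k|^2$ in the numerator from above by $1$) produces the constant $4+\sqrt{2}$, which is absorbed into $5\sqrt{2}$.

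The only non-routine point is the first step: pinning down that $\psi_j(S(\theta))$ has rank one and identifying the explicit rank-one formula. Everything afterwards is a standard triangle-inequality assembly of the four estimates from Lemma \ref{lemmacompkernel} together with elementary size bounds in terms of $m$.
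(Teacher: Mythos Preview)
Your argument is correct and ultimately rests on the same two estimates (parts (iii) and (iv) of Lemma~\ref{lemmacompkernel}) as the paper, but the route is genuinely different. The paper first makes the algebraic reduction
\[
\|\psi_j-\psi_k\|_{H^\infty/\theta H^\infty}
=\|b_{\lambda_k}-b_{\lambda_j}\|_{H^\infty/(b_{\lambda_j}b_{\lambda_k}) H^\infty},
\]
obtained by factoring out the inner function $\theta/(b_{\lambda_j}b_{\lambda_k})$, and then estimates the norm of $(b_{\lambda_k}-b_{\lambda_j})(S(\phi))$ on the two-dimensional space $H(\phi)$ with $\phi=b_{\lambda_j}b_{\lambda_k}$, decomposing an arbitrary $h\in H(\phi)$ along the pair $\phi e_{\lambda_j}/b_{\lambda_j},\ \phi e_{\lambda_k}/b_{\lambda_k}$. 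You bypass this reduction and work directly in $H(\theta)$ via the rank-one identity $\psi_j(S(\theta))x=x(\lambda_j)\,\psi_j e_{\lambda_j}$. The paper's reduction has the conceptual merit of making it transparent that the bound depends only on the pair $(\lambda_j,\lambda_k)$ and not on the remaining roots of $\theta$; your approach is a little more streamlined and actually produces the slightly sharper constant $4+\sqrt{2}$ before absorbing it into $5\sqrt{2}$.

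One small correction that does not affect the argument: the kernel of $\psi_j(S(\theta))$ is $H(\theta)\cap b_{\lambda_j}H^2=b_{\lambda_j}H(\psi_j)$, not $H(\theta)\ominus \psi_j H(b_{\lambda_j})=H(\psi_j)$; these are distinct $(N-1)$-dimensional subspaces in general. What you actually need, and what is true, is the orthogonal decomposition $H(\theta)=H(b_{\lambda_j})\oplus b_{\lambda_j}H(\psi_j)$: writing $x=\alpha e_{\lambda_j}+b_{\lambda_j}g$ one has $x(\lambda_j)=\alpha$ and $\psi_j(S(\theta))x=\alpha\,\psi_j e_{\lambda_j}$, which is exactly your rank-one formula.
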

\begin{proof}
If $\lambda_j=\lambda_k$, then the conclusion holds trivially, so we assume henceforth that $\lambda_j\neq\lambda_k$. We see that  
\begin{align*}
\|\psi_j-\psi_k \|_{H^\infty/\theta H^\infty}&= \inf\{\|\psi_j-\psi_k +\theta f \|_{H^\infty}: f\in H^\infty \}\\
&=\inf\left\{\left\|\frac{\theta}{b_{\lambda_j} b_{\lambda_k}}(b_{\lambda_k}-b_{\lambda_j} +b_{\lambda_j} b_{\lambda_k} f) \right\|_{H^\infty}: f\in H^\infty \right\}\\
&=\inf\{\|b_{\lambda_k}-b_{\lambda_j} +b_{\lambda_j} b_{\lambda_k} f \|_{H^\infty}: f\in H^\infty \}\\
&=\|b_{\lambda_k}-b_{\lambda_j}\|_{H^\infty /b_{\lambda_j} b_{\lambda_k}H^\infty }.
\end{align*}
Set $\phi=b_{\lambda_j} b_{\lambda_k}$.
By Lemma \ref{l-lcmspan}, we have that 
$$
H(\phi)=\ker b_{\lambda_j}(S(\phi))\vee\ker b_{\lambda_k}(S(\phi)) .
$$
The subspace $\ker b_{\lambda_i}(S(\phi))$ is one-dimensional for $i\in \{j,k\}$, and in fact it is spanned by
$
\phi e_{\lambda_i}/b_{\lambda_i} .
$
Therefore, any $h\in H(\phi)$ can be written as
$$
h=a_j \phi e_{\lambda_j}/b_{\lambda_j}+a_k \phi e_{\lambda_k}/b_{\lambda_k}
$$
for some $a_j,a_k \in \C$.
In particular, we see that
$$
h(\lambda_j)=a_j b_{\lambda_k}(\lambda_j)
$$
and
$$
h(\lambda_k)=a_k b_{\lambda_j}(\lambda_k).
$$
We get
\begin{align*}
\|(b_{\lambda_k}-b_{\lambda_j})(S(\phi))h \|&=\|b_{\lambda_k}(\lambda_j) a_j \phi e_{\lambda_j}/b_{\lambda_j}- b_{\lambda_j}(\lambda_k) a_k \phi e_{\lambda_k}/b_{\lambda_k}\|_{H^2}\\
&=\|h(\lambda_j)\phi e_{\lambda_j}/b_{\lambda_j}-h(\lambda_k)\phi e_{\lambda_k}/b_{\lambda_k}\|_{H^2}\\
&\leq |h(\lambda_j)| \|\phi e_{\lambda_j}/b_{\lambda_j}-\phi e_{\lambda_k}/b_{\lambda_k}\|_{H^2}+|h(\lambda_j)-h(\lambda_k)| \|\phi e_{\lambda_k}/b_{\lambda_k}\|_{H^2}\\
&\leq \|h\|_{H^2} (1-|\lambda_j|^2)^{-1/2} \|\phi e_{\lambda_j}/b_{\lambda_j}-\phi e_{\lambda_k}/b_{\lambda_k}\|_{H^2}\\
&+ (1-|\lambda_k|^2)^{1/2}|h(\lambda_j)-h(\lambda_k)|\\
&\leq \|h\|_{H^2} (1-|\lambda_j|^2)^{-1/2} \|\phi e_{\lambda_j}/b_{\lambda_j}-\phi e_{\lambda_k}/b_{\lambda_k}\|_{H^2}\\
&+ (1-|\lambda_k|^2)^{1/2}\|h\|_{H^2} \|\kappa_{\lambda_j}-\kappa_{\lambda_k}\|_{H^2}.
\end{align*}
By virtue of Lemma \ref{lemmacompkernel}, we find
\begin{align*}
\|(b_{\lambda_k}-b_{\lambda_j})(S(\phi))h \|& \leq 5 |b_{\lambda_j}(\lambda_k)|^{1/2}\|h\| \left( \frac{1}{1-|\lambda_j|^2}+\frac{1}{1-|\lambda_k|^2}\right)^{1/2}\\
&\leq  \frac{5\sqrt{2}|b_{\lambda_j}(\lambda_k)|^{1/2}}{(1-\max \{|\lambda_j|,|\lambda_k| \}^2)^{1/2}}\|h\|.
\end{align*}
Since $h\in H(\theta)$ was arbitrary, we find
$$
\|b_{\lambda_k}-b_{\lambda_j}\|_{H^\infty/\phi H^\infty}=\|(b_{\lambda_k}-b_{\lambda_j})(S(\phi))\|\leq  \frac{5\sqrt{2}|b_{\lambda_j}(\lambda_k)|^{1/2}}{(1-\max \{|\lambda_j|,|\lambda_k| \}^2)^{1/2}}
$$
and in view of the equality 
$$
\|\psi_j-\psi_k \|_{H^\infty/\theta H^\infty}=\|b_{\lambda_k}-b_{\lambda_j}\|_{H^\infty /\phi H^\infty }
$$
the proof is complete.
\end{proof}

For the next step, we introduce some notation. Given $\theta=b_{\lambda_1}\cdots b_{\lambda_N}$, we set $\alpha_0=1$ and
$$
\alpha_k=b_{\lambda_1}\ldots b_{\lambda_k}
$$
for $1\leq k \leq N$. These functions allow us to pick a basis for the underlying Hilbert space that is well-adapted to our purpose, as is made clear in the following proposition. First we need an easy lemma.

\begin{lemma}\label{l-cyclic}
Let $T\in B(\hil)$ be a multiplicity-free operator of class $C_0$ with minimal function $\theta=b_{\lambda_1}\ldots b_{\lambda_N}$. Let $\xi\in \hil$ be a unit vector which is also cyclic for $T$. Then, the set $\{\alpha_k(T)\xi:0\leq k \leq N-1\}$ is a basis for $\hil$.
\end{lemma}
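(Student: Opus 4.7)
The plan is to establish via a dimension count that the proposed $N$ vectors form a basis, by verifying that they are linearly independent in the $N$-dimensional space $\hil$.

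First I would observe that $\dim\hil = N$. By Theorem \ref{classifC0} there exist quasiaffinities $X:\hil \to H(\theta)$ and $Y:H(\theta)\to \hil$ intertwining $T$ with $S(\theta)$; since $\dim H(\theta) = N$ is finite and both maps are injective, $\hil$ must also be $N$-dimensional.

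Next I would reduce linear independence to a divisibility question. If $\sum_{k=0}^{N-1} c_k \alpha_k(T)\xi = 0$, set $p = \sum_{k=0}^{N-1} c_k \alpha_k \in H^\infty$, so that $p(T)\xi = 0$. Since $p(T)$ commutes with $T$ and $\xi$ is cyclic, $p(T)$ annihilates every $T^n\xi$ and hence all of $\hil$ by continuity; therefore $p\in \theta H^\infty$.

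The remaining task is to show that $\alpha_0,\alpha_1,\ldots,\alpha_{N-1}$ are linearly independent modulo $\theta H^\infty$. I would do this inductively by peeling off Blaschke factors. Assuming $c_0,\ldots,c_{k-1}$ have already been shown to vanish, I would write $p = \alpha_k q$ with $q = c_k + c_{k+1}b_{\lambda_{k+1}} + \cdots + c_{N-1}b_{\lambda_{k+1}}\cdots b_{\lambda_{N-1}}$; comparing with $p = \theta h = \alpha_k\cdot b_{\lambda_{k+1}}\cdots b_{\lambda_N}\cdot h$, the inner factor $\alpha_k$ cancels in $H^\infty$ to leave $q = b_{\lambda_{k+1}}\cdots b_{\lambda_N}\, h$, and evaluating at $z = \lambda_{k+1}$ yields $c_k = q(\lambda_{k+1}) = 0$. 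The argument goes through regardless of possible repetitions among the $\lambda_j$, since all manipulations occur at the level of inner function factorizations in $H^\infty$ rather than by cancelling multiplicities at points. I expect no serious obstacle; the mildly delicate point is simply making sure this inductive peeling is valid in $H^\infty$ rather than only for polynomials.
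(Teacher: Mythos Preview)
Your proof is correct and follows essentially the same approach as the paper's own proof: first deduce $\dim\hil=N$ from quasisimilarity with $S(\theta)$, then use cyclicity of $\xi$ to pass from $p(T)\xi=0$ to $\theta\mid p$, and finally peel off the Blaschke factors $b_{\lambda_1},b_{\lambda_2},\ldots$ one at a time to force $c_0=c_1=\cdots=0$. The only differences are cosmetic---you justify the step $p(T)\xi=0\Rightarrow p(T)=0$ via commutation with $T$ rather than by writing an arbitrary $h$ as a polynomial in $T$ applied to $\xi$, and you phrase the induction in terms of cancelling $\alpha_k$ in $H^\infty$ rather than evaluating at successive roots, but the underlying argument is identical.
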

\begin{proof}
It follows from Theorem \ref{classifC0} that $\hil$ has dimension $N$, and thus it suffices to show that the set $\{\alpha_k(T)\xi:0\leq k \leq N-1\}$ is linearly independent. Assume therefore that there are some $c_0,\ldots,c_{N-1}\in \C$ such that
$$
\sum_{k=0}^{N-1}c_k \alpha_k(T)\xi=0.
$$
Because $\xi$ is assumed to be cyclic for $T$, any vector $h\in \hil$ can be written as $h=p(T)\xi$ for some polynomial $p$. Therefore, the relation above implies that 
$$
\sum_{k=0}^{N-1}c_k \alpha_k(T)=0
$$
and thus $\theta$ divides $\sum_{k=0}^{N-1}c_k \alpha_k$. In particular, $\sum_{k=0}^{N-1}c_k \alpha_k$ must vanish at $\lambda_1$ and we find
$$
c_0=\sum_{k=0}^{N-1}c_k \alpha_k(\lambda_1)=0
$$
whence $\theta$ divides $c_1 \alpha_1+\ldots+c_{N-1}\alpha_{N-1}$. This implies that $\theta/b_{\lambda_1}$ divides 
$$
c_1+\sum_{k=2}^{N-1}c_k \alpha_k/b_{\lambda_1}
$$
so this last function vanishes at $\lambda_2$, which yields $c_1=0$. Proceeding inductively, we find $c_0=c_1=\ldots=c_{N-1}=0$.
\end{proof}

We now establish a crucial estimate on the angle between the different elements of the basis appearing in the above lemma.

\begin{proposition}\label{p-dilationangle}
Let $T\in B(\hil)$ be a multiplicity-free operator of class $C_0$ with minimal function $\theta=b_{\lambda_1}\ldots b_{\lambda_N}$. Define
$$
\eta=\sup_{1\leq j,k\leq N}\frac{|b_{\lambda_j}(\lambda_k)|^{1/2}}{(1-\max \{|\lambda_j|,|\lambda_k| \}^2)^{1/2}}.
$$
Assume that $\|\psi_N(T)\|> \beta+5\sqrt{2}\eta$ for some $0<\beta< 1$. Then, there exists a unit vector $\xi\in \hil$ which is cyclic for $T$ with the property that
$$
\|\phi(T)\xi\|\geq \beta
$$
for every inner divisor $\phi$ of $\theta$ and
$$
|\langle \alpha_j(T)\xi,\alpha_k(T)\xi\rangle|\leq \sqrt{1-\beta^2}\|\alpha_j(T) \xi\| \|\alpha_k(T)\xi\|
$$
for $0\leq j<k\leq N-1$.
\end{proposition}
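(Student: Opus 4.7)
My strategy is to first produce a unit vector satisfying the norm inequalities via a near-maximizer argument, then secure cyclicity by a genericity argument, and finally address the angle conditions.

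To begin, I would choose a unit vector $x_0 \in \hil$ with $\|\psi_N(T) x_0\| > \beta + 5\sqrt{2}\eta$, available by the definition of the operator norm. Lemma \ref{l-psi} gives $\|\psi_j - \psi_N\|_{H^\infty/\theta H^\infty} \leq 5\sqrt{2}\eta$ for each $1 \leq j \leq N$, so by contractivity of the Sz.-Nagy--Foias functional calculus $\|\psi_j(T) - \psi_N(T)\| \leq 5\sqrt{2}\eta$; the triangle inequality then yields $\|\psi_j(T) x_0\| > \beta$ for every $j$. For any non-trivial inner divisor $\phi$ of $\theta$ (that is, $\phi \neq \theta$), I pick some $j$ with $b_{\lambda_j} \nmid \phi$; then $\phi$ divides $\psi_j$, so writing $\psi_j = g\phi$ with $g$ inner and using the commutation $\psi_j(T) = g(T)\phi(T)$ together with $\|g(T)\| \leq 1$, I deduce $\|\phi(T) x_0\| \geq \|\psi_j(T) x_0\| > \beta$. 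These strict inequalities form an open condition on $x_0$ in the unit sphere of $\hil$, and since cyclic vectors are dense $G_\delta$ by Theorem \ref{maxvector}, I can replace $x_0$ by a nearby cyclic unit vector $\xi$ still satisfying $\|\phi(T)\xi\| > \beta$ for every such $\phi$.

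For the angle inequalities, I first rewrite $\alpha_k(T)\xi = \phi_{jk}(T)\alpha_j(T)\xi$ where $\phi_{jk} = b_{\lambda_{j+1}}\cdots b_{\lambda_k}$; setting $u = \alpha_j(T)\xi$, the inequality to prove becomes $|\langle u, \phi_{jk}(T) u\rangle| \leq \sqrt{1-\beta^2}\|u\|\|\phi_{jk}(T) u\|$, equivalently $\sin \angle(u, \phi_{jk}(T) u) \geq \beta$. I expect this step to be the main obstacle, because the norm estimates do not immediately yield angular control: $\alpha_j(T)$ is typically not bounded below. My approach is to model the Jordan block situation. Inside $S(\theta)$, taking $\xi$ proportional to the reproducing kernel $\kappa_{\lambda_1}$ forces $\alpha_k(S(\theta))\xi$ to lie in the span of the Malmquist basis vectors $f_{k+1},\ldots,f_N$, making the Gram matrix of $\{\alpha_k(S(\theta))\xi\}$ lower triangular with off-diagonal entries controlled by the quantities $|b_{\lambda_i}(\lambda_m)|$ that enter $\eta$. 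I would then refine the choice of $x_0$ to approximate this Jordan block vector via the quasiaffinities of Theorem \ref{classifC0}, and use Lemma \ref{lemmacompkernel} together with the assumption $5\sqrt{2}\eta < 1-\beta$ to bound the off-diagonal Gram entries of $\{\alpha_k(T)\xi\}$ by $\sqrt{1-\beta^2}$ times the geometric mean of the diagonal entries.
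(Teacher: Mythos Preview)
Your treatment of the first conclusion is essentially the paper's argument: pick a near-maximizer for $\psi_N(T)$, use Lemma~\ref{l-psi} to transfer the lower bound to every $\psi_j(T)$, then dominate any proper inner divisor $\phi$ by some $\psi_j$, and finally use the density of cyclic vectors (Theorem~\ref{maxvector}) to achieve cyclicity without losing the strict inequalities. That part is fine.

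The gap is in the angle inequality. Your plan---transport a specific Jordan-block vector through the quasiaffinities of Theorem~\ref{classifC0} and control the Gram matrix via Lemma~\ref{lemmacompkernel} and~$\eta$---is not a proof, and it is aimed in the wrong direction. Quasiaffinities need not be bounded below, so they give you no uniform control on norms or angles when you push a vector from $H(\theta)$ into $\hil$; recovering such control is precisely the content of the similarity problem you are trying to solve, so this is circular. Moreover, your proposed refinement of $x_0$ would have to \emph{simultaneously} approximate a specific model vector and remain a near-maximizer for $\psi_N(T)$, and nothing guarantees compatibility of those two constraints.

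The paper's argument for the angle estimate is entirely different and does not use $\eta$ at all: once $\|\phi(T)\xi\|\geq\beta$ holds for every inner divisor $\phi$ of $\theta$, the inequality
\[
|\langle \alpha_j(T)\xi,\alpha_k(T)\xi\rangle|\leq \sqrt{1-\beta^2}\,\|\alpha_j(T)\xi\|\,\|\alpha_k(T)\xi\|
\]
follows from the minimal unitary dilation $U$ of $T$. For any $f\in H^\infty$ and $h\in\hil$ one has the defect identity
\[
\|(f(U)-f(T))h\|^2=\|f(U)h\|^2-\|f(T)h\|^2.
\]
Apply this with $f=\theta/\alpha_k$ and $h=\alpha_j(T)\xi$ (so that $f\alpha_j$ is an inner divisor of $\theta$ when $j<k$, giving $\|f(T)h\|\geq\beta$), and note that the cross term $\langle f(T)\alpha_j(T)\xi,\,f(U)\alpha_k(T)\xi\rangle$ vanishes because $f(T)\alpha_k(T)=\theta(T)=0$. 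Since $f(U)$ is unitary, one obtains
\[
|\langle \alpha_j(T)\xi,\alpha_k(T)\xi\rangle|
=|\langle (f(U)-f(T))\alpha_j(T)\xi,\,f(U)\alpha_k(T)\xi\rangle|
\leq \sqrt{1-\beta^2}\,\|\alpha_j(T)\xi\|\,\|\alpha_k(T)\xi\|.
\]
This is the missing idea: the angle bound is a direct consequence of the norm bounds via the dilation, not an independent estimate requiring $\eta$ or the Jordan model.
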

\begin{proof}
Since $\|\psi_N(T)\|>\beta+5\sqrt{2} \eta$, we may invoke Theorem \ref{maxvector} to find a unit vector $\xi \in \hil$ which is cyclic for $T$ and such that $\|\psi_N(T)\xi\|>\beta+5\sqrt{2} \eta$. Using Lemma \ref{l-psi}, for every $1\leq j \leq N-1$ we find that 
\begin{align*}
\|\psi_j(T)\xi\|&\geq \|\psi_N(T)\xi\|-\|(\psi_j-\psi_N)(T)\xi\|\\
&\geq \beta+5\sqrt{2}\eta-\|\psi_j-\psi_N\|_{H^\infty/\theta H^\infty}\\
&\geq \beta.
\end{align*}
Given an inner divisor $\phi$ of $\theta$, there always exists some index $1\leq j \leq N$ for which $\psi_j= \phi' \phi $ for some inner function $\phi'\in H^\infty$. Thus,
$$
\|\phi(T)\xi\| \geq \|\phi'(T)\phi(T)\xi\|= \|\psi_j(T)\xi\|\geq \beta,
$$
and the first statement is established.
Let us now consider $U:\kil\to \kil$ the minimal unitary dilation of $T:\hil\to \hil$ (see \cite{NF2} for details).  For every function $f\in H^\infty$ and every vector $h\in \hil$, we have that 
$$
(f(U)-f(T))h=(f(U)-P_{\hil}f(U))h=P_{\kil\ominus \hil}f(U)h
$$
so that
$$
\| (f(U)-f(T))h\|^2 =\|f(U)h\|^2-\|P_{\hil}f(U)h\|^2=\|f(U)h\|^2-\|f(T)h\|^2
$$
and thus
\begin{align*}
\left\| \left(\frac{\theta}{\alpha_k}(U)-\frac{\theta}{\alpha_k}(T)\right)\alpha_j(T)\xi\right\|^2 &= \left\|\frac{\theta}{\alpha_k}(U)\alpha_j(T)\xi\right\|^2-\left\|\frac{\theta \alpha_j }{\alpha_k}(T)\xi\right\|^2\\
&\leq \|\alpha_j(T)\xi\|^2-\beta^2\\
&\leq (1-\beta^2) \|\alpha_j(T)\xi\|^2
\end{align*}
whenever $0\leq j<k\leq N-1$. 
In addition, note that
\begin{align*}
\left\langle \frac{\theta}{\alpha_k}(T)\alpha_j(T)\xi,\frac{\theta}{\alpha_k}(U)\alpha_k(T)\xi \right\rangle&=\left\langle \frac{\theta}{\alpha_k}(T)\alpha_j(T)\xi,P_{\hil}\frac{\theta}{\alpha_k}(U)\alpha_k(T)\xi \right\rangle\\
&=\left\langle \frac{\theta}{\alpha_k}(T)\alpha_j(T)\xi,\frac{\theta}{\alpha_k}(T)\alpha_k(T)\xi \right\rangle\\
&=0
\end{align*}
since $\theta(T)=0$.

Using with these facts, we find 
\begin{align*}
\left|\langle \alpha_j(T)\xi,\alpha_k(T)\xi \rangle\right| &=\left|\left\langle \frac{\theta}{\alpha_k}(U) \alpha_j(T)\xi,\frac{\theta}{\alpha_k}(U)\alpha_k(T)\xi \right\rangle\right|\\
&=\left|\left\langle \left(\frac{\theta}{\alpha_k}(U) -\frac{\theta}{\alpha_k}(T)\right)\alpha_j(T)\xi,\frac{\theta}{\alpha_k}(U)\alpha_k(T)\xi \right\rangle\right|\\
&\leq \left\| \left(\frac{\theta}{\alpha_k}(U) -\frac{\theta}{\alpha_k}(T)\right)\alpha_j(T)\xi\right\| \left\| \alpha_k(T) \xi\right\|\\
&\leq \sqrt{1-\beta^2}\|\alpha_j(T)\xi\|\|\alpha_k(T)\xi\|
\end{align*}
where we used the fact that $(\theta/\alpha_k)(U)$ is unitary. The proof is complete.
\end{proof}
This proposition was the last ingredient needed to prove the principal technical tool of this section.

\begin{theorem}\label{t-simalg}
Let $T_1\in B(\hil_1), T_2\in B(\hil_2)$ be multiplicity-free operators of class $C_0$ with minimal function $\theta=b_{\lambda_1}\ldots b_{\lambda_N}$. Define
$$
\eta=\sup_{1\leq j,k\leq N}\frac{|b_{\lambda_j}(\lambda_k)|^{1/2}}{(1-\max \{|\lambda_j|,|\lambda_k| \}^2)^{1/2}}.
$$
Assume that 
$$
\|\psi_N(T_1)\|>  \beta_1+5\sqrt{2}\eta.
$$
and
$$
\|\psi_N(T_2)\|> \beta_2+5\sqrt{2}\eta
$$ 
for some constants $\beta_1,\beta_2$ satisfying
$$
\sqrt{1-\frac{1}{(N-1)^2}}<\beta_1,\beta_2<1.
$$
Then, there exists an invertible operator $X:\hil_1\to \hil_2$ such that $XT_1=T_2 X$ and 
$$
\max\{\|X\|,\|X^{-1}\| \}\leq C(\beta_1,\beta_2,N)
$$
where $ C(\beta_1,\beta_2,N)>0$ is a constant depending only on $\beta_1,\beta_2$ and $N$.
\end{theorem}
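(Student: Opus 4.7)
The plan is to apply Proposition \ref{p-dilationangle} twice, once to each operator, and then construct the similarity $X$ directly using the cyclic vectors produced by the proposition. By hypothesis we may pick a cyclic unit vector $\xi_1\in\hil_1$ (for $T_1$) and a cyclic unit vector $\xi_2\in\hil_2$ (for $T_2$) satisfying
\[
\|\phi(T_i)\xi_i\|\geq\beta_i\qquad\text{for every inner divisor $\phi$ of $\theta$},
\]
and the near-orthogonality bound
\[
|\langle \alpha_j(T_i)\xi_i,\alpha_k(T_i)\xi_i\rangle|\leq\sqrt{1-\beta_i^{2}}\,\|\alpha_j(T_i)\xi_i\|\|\alpha_k(T_i)\xi_i\|
\]
for $0\leq j<k\leq N-1$. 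By Lemma \ref{l-cyclic}, the vectors $v_k^{(i)}:=\alpha_k(T_i)\xi_i$ ($0\leq k\leq N-1$) form a basis of $\hil_i$, and because $\alpha_k$ is an inner divisor of $\theta$ and $T_i$ is a contraction we have $\beta_i\leq\|v_k^{(i)}\|\leq 1$.

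Next, I would define $X:\hil_1\to\hil_2$ as the unique linear map that sends $p(T_1)\xi_1$ to $p(T_2)\xi_2$ for every polynomial $p$. Well-definedness uses the common minimal function: if $p(T_1)\xi_1=0$ then since $\xi_1$ is cyclic one deduces $p(T_1)=0$, hence $\theta$ divides $p$, and therefore $p(T_2)=0$ as well. Bijectivity follows by the symmetric construction, and the intertwining relation $XT_1=T_2X$ is automatic from $X(zp(T_1)\xi_1)=(zp)(T_2)\xi_2=T_2\,p(T_2)\xi_2$. In particular $Xv_k^{(1)}=v_k^{(2)}$ for each $k$.

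Finally, I would control $\|X\|$ and $\|X^{-1}\|$ through the Gram matrices $G_i=[\langle v_k^{(i)},v_j^{(i)}\rangle]_{j,k=0}^{N-1}$. For a vector $h=\sum c_kv_k^{(1)}$ one has $\|h\|^{2}=c^{*}G_1c$ and $\|Xh\|^{2}=c^{*}G_2c$, so the estimate reduces to bounding the spectrum of $G_1$ from below and $G_2$ from above. Writing $G_i=D_i\widetilde{G}_iD_i$ where $D_i=\operatorname{diag}(\|v_k^{(i)}\|)$ and $\widetilde{G}_i$ is the Gram matrix of the normalized basis, $\widetilde{G}_i$ has $1$ on the diagonal and off-diagonal entries of modulus at most $\sqrt{1-\beta_i^{2}}$, so by a Gershgorin/Schur estimate
\[
\|\widetilde{G}_i-I\|\leq(N-1)\sqrt{1-\beta_i^{2}}.
\]
The hypothesis $\beta_i>\sqrt{1-1/(N-1)^{2}}$ makes this strictly less than $1$, hence $\widetilde{G}_i$ is positive definite with eigenvalues in $[1-(N-1)\sqrt{1-\beta_i^{2}},\,1+(N-1)\sqrt{1-\beta_i^{2}}]$. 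Combining with the bounds $\beta_i\leq\|v_k^{(i)}\|\leq 1$ on the entries of $D_i$ yields
\[
\|X\|^{2}\leq\frac{1+(N-1)\sqrt{1-\beta_2^{2}}}{\beta_1^{2}\bigl(1-(N-1)\sqrt{1-\beta_1^{2}}\bigr)},
\]
and a symmetric bound for $\|X^{-1}\|^{2}$ with the roles of $1$ and $2$ reversed. Both right-hand sides depend only on $\beta_1$, $\beta_2$ and $N$, giving the constant $C(\beta_1,\beta_2,N)$. The only delicate point is the Gram-matrix estimate, and it is precisely here that the numerical assumption $\beta_i>\sqrt{1-1/(N-1)^{2}}$ is used.
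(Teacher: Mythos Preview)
Your proposal is correct and follows essentially the same approach as the paper: apply Proposition~\ref{p-dilationangle} to each $T_i$ to obtain cyclic unit vectors $\xi_i$, define $X$ on the bases $\{\alpha_k(T_i)\xi_i\}_k$ (equivalently, by $p(T_1)\xi_1\mapsto p(T_2)\xi_2$), verify the intertwining, and then use the near-orthogonality estimate together with $\beta_i\le\|\alpha_k(T_i)\xi_i\|\le 1$ to bound the Riesz basis constants. The only cosmetic difference is in the packaging of the final norm estimate: the paper expands the square directly and routes through $\bigl(\sum_k|c_k|\bigr)^2\ge\|\sum_k c_k\alpha_k\|_{H^\infty}^2\ge\|\sum_k c_k\alpha_k(T_2)\xi_2\|^2$, whereas your Gram-matrix/Gershgorin argument compares $c^*G_2c$ to $c^*G_1c$ directly and in fact yields a slightly sharper explicit constant.
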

\begin{proof}
By Proposition \ref{p-dilationangle}, for every $i=1,2$ we can find a unit cyclic vector $\xi_i\in \hil_i$ with the property that
$$
|\langle \alpha_j(T_i)\xi_i,\alpha_k(T_i)\xi_i\rangle|\leq\sqrt{1-\beta_i^2}\|\alpha_j(T_i) \xi_i\| \|\alpha_k(T_i)\xi_i\|
$$
and
$$
\|\alpha_j(T_i)\xi_i\|\geq \beta_i
$$
whenever $0\leq j<k\leq N-1$. 
Given $c_0,\ldots, c_{N-1}\in \C$, let us set
$$
X\left(\sum_{k=0}^{N-1}c_k \alpha_k(T_1)\xi_1\right)=\sum_{k=0}^{N-1}c_k \alpha_k(T_2) \xi_2.
$$
By Lemma \ref{l-cyclic}, this defines an invertible operator $X:\hil_1\to \hil_2$ with $X\xi_1=\xi_2$. Arguing as in the proof of Lemma \ref{l-cyclic}, we see that
$$
u(T_1)\xi_1=\sum_{k=0}^{N-1}c_k \alpha_k(T_1)\xi_1
$$
holds if and only if $\theta$ divides $\sum_{k=0}^{N-1}c_k \alpha_k-u$, which in turn holds if and only if
$$
u(T_2)\xi_2=\sum_{k=0}^{N-1}c_k \alpha_k(T_2)\xi_2.
$$
Therefore, we find
$$
Xu(T_1)\xi_1=u(T_2)\xi_2=u(T_2)X\xi_1
$$
for any function $u\in H^\infty$. Now, every $h\in \hil_1$ can be written as $h=p(T_1)\xi_1$ for some polynomial $p$, so that
$$
XT_1h=XT_1 p(T_1)\xi_1=T_2p(T_2)X\xi_1=T_2Xp(T_1)\xi_1=T_2 X h
$$
for every $h\in \hil_1$, whence $XT_1=T_2X$. It only remains to estimate the norm of $X$ and $X^{-1}$. 
Given $c_0,\ldots,c_{N-1}\in \C$, we see that
\begin{align*}
&\sum_{k=0}^{N-1}|c_k|^2 \|\alpha_k(T_1)\xi_1\|^2+2\Re\left(\sum_{0\leq j<k\leq N-1} c_j\ol{c_k}\langle \alpha_j(T_1)\xi_1,\alpha_k(T_1)\xi_1\rangle\right)\\
&\geq \sum_{k=0}^{N-1}|c_k|^2 \|\alpha_k(T_1)\xi_1\|^2-2\sum_{0\leq j<k\leq N-1} |c_j| |c_k| |\langle \alpha_j(T_1)\xi_1,\alpha_k(T_1)\xi_1\rangle|\\
&\geq \sum_{k=0}^{N-1}|c_k|^2 \|\alpha_k(T_1)\xi_1\|^2
-2\sum_{0\leq j<k\leq N-1} |c_j| |c_k| \sqrt{1-\beta_1^2}\|\alpha_j(T_1)\xi_1\| \|\alpha_k(T_1)\xi_1\|\\
&=\left(1-(N-1)\sqrt{1-\beta_1^2} \right)\sum_{k=0}^{N-1}|c_k|^2 \|\alpha_k(T_1)\xi_1\|^2\\
&+\sqrt{1-\beta_1^2}\left((N-1)\sum_{k=0}^{N-1}|c_k|^2 \|\alpha_k(T_1)\xi_1\|^2-2\sum_{0\leq j<k\leq N-1} |c_j| |c_k| \|\alpha_j(T_1)\xi_1\| \|\alpha_k(T_1)\xi_1\|\right)\\
\end{align*}
The last bracketed term being positive, the calculation above gives
$$
\left\|\sum_{k=0}^{N-1}c_k \alpha_k(T_1)\xi_1\right\|^2
\geq \left(1-(N-1)\sqrt{1-\beta_1^2} \right)\sum_{k=0}^{N-1}|c_k|^2 \|\alpha_k(T_1)\xi_1\|^2.
$$
Thus,
\begin{align*}
\left\|\sum_{k=0}^{N-1}c_k \alpha_k(T_1)\xi_1\right\|^2
&\geq \left(1-(N-1)\sqrt{1-\beta_1^2} \right)\sum_{k=0}^{N-1}|c_k|^2 \|\alpha_k(T_1)\xi_1\|^2\\
&\geq \left(1-(N-1)\sqrt{1-\beta_1^2} \right)\sum_{k=0}^{N-1}|c_k|^2 \beta_1^2\\
&\geq \frac{\beta_1^2}{N}\left(1-(N-1)\sqrt{1-\beta_1^2} \right)\left(\sum_{k=0}^{N-1}|c_k|\right)^2 \\
&\geq \frac{\beta_1^2}{N}\left(1-(N-1)\sqrt{1-\beta_1^2} \right)\left\|\sum_{k=0}^{N-1}c_k\alpha_k\right\|_{H^\infty}^2\\
&\geq \frac{\beta_1^2}{N}\left(1-(N-1)\sqrt{1-\beta_1^2} \right)\left\|\sum_{k=0}^{N-1}c_k\alpha_k(T_2)\xi_2\right\|^2
\end{align*}
where we used $\|\alpha_k\|_{H^\infty}=1$.
By symmetry, we also have
$$
\left\|\sum_{k=0}^{N-1}c_k \alpha_k(T_2)\xi_2\right\|^2\geq \frac{\beta_2^2}{N}\left(1-(N-1)\sqrt{1-\beta_2^2} \right)\left\|\sum_{k=0}^{N-1}c_k\alpha_k(T_1)\xi_1\right\|^2.
$$
This shows that 
$$
\|X\|^2\leq \left(\frac{\beta_1^2}{N}\left(1-(N-1)\sqrt{1-\beta_1^2} \right)\right)^{-1}
$$
$$
\|X^{-1}\|^2\leq \left(\frac{\beta_2^2}{N}\left(1-(N-1)\sqrt{1-\beta_2^2} \right)\right)^{-1}
$$
and the proof is complete.
\end{proof}

The next corollary is our main result. 

\begin{corollary}\label{c-simalg}
Let $T\in B(\hil)$ be a multiplicity-free operator of class $C_0$ with minimal function $\theta=b_{\lambda_1}\ldots b_{\lambda_N}$. Define
$$
\eta=\sup_{1\leq j,k\leq N}\frac{|b_{\lambda_j}(\lambda_k)|^{1/2}}{(1-\max \{|\lambda_j|,|\lambda_k| \}^2)^{1/2}}.
$$
Consider $$\Psi:H^\infty(T)\to H^\infty (S(\theta))$$ such that $\Psi(u(T))=u(S(\theta))$ for every $u\in H^\infty$. Assume that $\Psi$ is bounded and that
$$
\sqrt{1-\frac{1}{(N-1)^2}}<\beta<1
$$
where $\beta=1/\|\Psi\|-5\sqrt{2}\eta$.
Then, there exists an invertible operator $X:\hil\to H(\theta)$ such that $XT=S(\theta) X$ and 
$$
\max\{\|X\|,\|X^{-1}\| \}\leq C(\Psi,N)
$$
where $C(\Psi,N)>0$ is a constant depending only on $\Psi$ and $N$.
\end{corollary}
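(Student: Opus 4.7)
The plan is to apply Theorem \ref{t-simalg} with $T_1=T$ and $T_2=S(\theta)$, choosing $\beta_1=\beta_2$ slightly below $\beta$ so that the hypotheses of that theorem follow automatically from those of the corollary.

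The key computation is the norm identity $\|\psi_N(S(\theta))\|=1$. Since $\psi_N$ is inner and $\theta=\psi_N b_{\lambda_N}$, factoring out $\psi_N$ (whose modulus is $1$ a.e.\ on $\partial \D$) gives
\[
\|\psi_N(S(\theta))\|=\inf_{h\in H^\infty}\|\psi_N+\theta h\|_\infty=\inf_{h\in H^\infty}\|1+b_{\lambda_N}h\|_\infty=\|1\|_{H^\infty/b_{\lambda_N}H^\infty}=1,
\]
the last equality holding because $H(b_{\lambda_N})$ is one-dimensional and thus $1(S(b_{\lambda_N}))$ is the identity on a one-dimensional space. Combined with the bounded homomorphism property $\Psi(\psi_N(T))=\psi_N(S(\theta))$, this yields
\[
\|\psi_N(T)\|\;\geq\;\frac{\|\psi_N(S(\theta))\|}{\|\Psi\|}\;=\;\frac{1}{\|\Psi\|}\;=\;\beta+5\sqrt{2}\eta.
\]
Since $\Psi(I)=I$ also forces $\|\Psi\|\geq 1$, we get $\beta\leq 1-5\sqrt{2}\eta$, which leaves the required slack on the $S(\theta)$ side.

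Next I choose $\e>0$ small enough that $\beta_1:=\beta_2:=\beta-\e$ still satisfies $\sqrt{1-1/(N-1)^2}<\beta_1<1$, which is possible by the standing hypothesis on $\beta$. The estimates above then produce
\[
\|\psi_N(T)\|\geq \beta+5\sqrt{2}\eta>\beta_1+5\sqrt{2}\eta,\qquad \|\psi_N(S(\theta))\|=1\geq \beta+5\sqrt{2}\eta>\beta_2+5\sqrt{2}\eta,
\]
so both hypotheses of Theorem \ref{t-simalg} are satisfied. That theorem then supplies an invertible operator $X:\hil\to H(\theta)$ with $XT=S(\theta) X$ and $\max\{\|X\|,\|X^{-1}\|\}\leq C(\beta_1,\beta_2,N)$. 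Once a rule for $\e$ in terms of $\beta$ and $N$ is fixed, $\beta_1$ and $\beta_2$ depend only on $\|\Psi\|$ (through $\beta$) and on $N$, so this bound can be relabeled $C(\Psi,N)$, which completes the proof.

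The only real substance lies in the norm identity $\|\psi_N(S(\theta))\|=1$ and in translating the bound on $\|\Psi\|$ into a lower bound for $\|\psi_N(T)\|$; beyond these two observations, the argument is just bookkeeping of constants to match the hypotheses of Theorem \ref{t-simalg}.
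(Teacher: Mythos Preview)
Your argument is correct and follows the same route as the paper: compute $\|\psi_N(S(\theta))\|=1$, convert the bound on $\|\Psi\|$ into a lower bound for $\|\psi_N(T)\|$, and invoke Theorem \ref{t-simalg}. In fact you are slightly more careful than the paper, which derives only the non-strict inequality $\|\psi_N(T)\|\geq \beta+5\sqrt{2}\eta$ and then appeals to Theorem \ref{t-simalg} without saying how to obtain the strict inequality that theorem requires; your device of passing to $\beta_1=\beta_2=\beta-\e$ (with $\e$ chosen by a fixed rule in terms of $\beta$ and $N$) makes this step explicit.
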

\begin{proof}
Notice first that
\begin{align*}
\|\psi_N\|_{H^\infty/\theta H^\infty}&=\inf\{\|\psi_N+\theta f\|_{H^\infty} :f\in H^\infty\}\\
&=\inf \{ \|1+(\theta/\psi_N)f\|_{H^\infty}:f\in H^\infty\}\\
&=\inf\{ \|1+b_{\lambda_N}f\|_{H^\infty}:f\in H^\infty\}\\
&\geq \inf\{ \|(1+b_{\lambda_N}f)(\lambda_N)\|_{H^\infty}:f\in H^\infty\}\\
&=1
\end{align*}
and thus 
$$
\|\psi_N(S(\theta))\|=\|\psi_N\|_{H^\infty/\theta H^\infty}=1,
$$
By the contractive property of the functional calculus, we have
$$
\|\psi_N(S(\theta))\|=\|\psi_N\|_{H^\infty/\theta H^\infty}\geq \|\psi_N(T)\|\geq \|\Psi\|^{-1}\|\psi_N(S(\theta))\|= \|\Psi\|^{-1}=\beta+5\sqrt{2}\eta.
$$
The result now follows directly from Theorem \ref{t-simalg}.
\end{proof}

Notice that the inequality
$$
\sqrt{1-\frac{1}{(N-1)^2}}<\beta=\frac{1}{\|\Psi\|}-5\sqrt{2}\eta 
$$
implies that $\eta$ cannot be too large. This restricts the positions of the roots $\lambda_1,\ldots,\lambda_N$ relative to each other and to the boundary of the disc. On the other hand, without these inequalities the existence of the isomorphism $\Psi$ is clearly a necessary condition for similarity between $T$ and $S(\theta)$.
%

The upcoming corollary deals with the simpler two-dimensional case where the statements are neater and fewer conditions are needed. In fact, a careful look at the proofs of this section shows that the assumptions involving $\eta$ in the previous results arise solely because of the need to obtain lower bounds on $\|\phi(T)\xi\|$ for every inner divisor $\phi$ of  $\theta$ from a given lower bound on $\|\psi_N(T)\xi\|$. In the two dimensional case, there is a trick to obtaining these lower bounds without imposing any condition on $\eta$; it is the essence of the next lemma.

\begin{lemma}\label{l-blambda}
Let $T\in B(\hil)$ be a contraction and let $\lambda_1,\lambda_2\in \D$. If $\xi\in \hil$ is a unit vector such that $\|b_{\lambda_1}(T)\xi\|\geq \beta$, then
$$
\|b_{\lambda_2}(T)\xi\|\geq \sqrt{(1-|\mu|)^2-(1-|\mu|^2)(1-\beta^2)}-|\mu|,
$$
where $\mu=b_{\lambda_2}(\lambda_1)$.
\end{lemma}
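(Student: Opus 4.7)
The plan is to reduce the inequality to a one-variable statement by studying $S := b_{\lambda_1}(T)$ together with the M\"obius composition $\phi := b_{\lambda_2}\circ b_{-\lambda_1}$. By the rational functional calculus, $S$ is a contraction, so the hypothesis reads $\|S\xi\|\geq \beta$. The function $\phi$ is a conformal automorphism of $\D$ with $\phi(0) = b_{\lambda_2}(\lambda_1) = \mu$, and by multiplicativity $\phi(S) = b_{\lambda_2}(T)$. I work from here on with the pair $(S,\phi)$, reading the desired inequality as a lower bound for $\|\phi(S)\xi\|$.

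Next, parametrize $\phi(z) = \omega(z+a)/(1+\bar{a}z)$ with $|\omega|=1$ and $\omega a = \mu$ (so that $|a|=|\mu|$), and introduce the auxiliary vector $\eta := (I+\bar{a}S)^{-1}\xi$, which is well-defined since $|\bar{a}|\,\|S\|\leq |\mu|<1$. The crux of the argument is the algebraic identity
\[
\phi(S)\xi - \mu\xi \;=\; \omega(1-|\mu|^2)\, S\eta.
\]
This follows by expanding $\phi(S)\xi = \omega(S+aI)\eta = \omega S\eta + \mu\eta$ on one hand, $\mu\xi = \mu(I+\bar{a}S)\eta = \mu\eta + \mu\bar{a}S\eta$ on the other, and invoking $\omega - \mu\bar{a} = \omega(1-|a|^2)$. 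Taking norms yields at once $\|\phi(S)\xi - \mu\xi\| = (1-|\mu|^2)\|S\eta\|$.

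It then suffices to bound $\|S\eta\|$ from below. Since $S$ commutes with $I+\bar{a}S$, one has $S\xi = (I+\bar{a}S)S\eta$, and hence
\[
\beta \;\le\; \|S\xi\| \;\le\; \|I+\bar{a}S\|\,\|S\eta\| \;\le\; (1+|\mu|)\|S\eta\|,
\]
so $\|S\eta\|\geq \beta/(1+|\mu|)$. A triangle inequality now produces
\[
\|\phi(S)\xi\| \;\geq\; (1-|\mu|^2)\|S\eta\| - |\mu| \;\geq\; (1-|\mu|)\beta - |\mu|,
\]
which is in fact slightly stronger than the stated bound: the identity $(1-|\mu|)^2\beta^2 - \bigl[(1-|\mu|)^2 - (1-|\mu|^2)(1-\beta^2)\bigr] = 2|\mu|(1-|\mu|)(1-\beta^2) \geq 0$ shows $(1-|\mu|)\beta \geq \sqrt{(1-|\mu|)^2 - (1-|\mu|^2)(1-\beta^2)}$, and subtracting $|\mu|$ recovers the claimed inequality. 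The main obstacle is simply spotting the M\"obius cancellation encoded in the identity for $\phi(S)\xi - \mu\xi$; once that is in hand, the rest is bookkeeping with the contractivity of $S$ and the triangle inequality.
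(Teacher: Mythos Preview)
Your argument is correct and takes a genuinely different route from the paper's. Both proofs begin with the same reduction: set $S=b_{\lambda_1}(T)$ and observe that $b_{\lambda_2}(T)=\phi(S)$ for the automorphism $\phi=b_{\lambda_2}\circ b_{\lambda_1}^{-1}$, which sends $0$ to $\mu$. From there the paper invokes the defect identity
\[
I-b_\mu(R)^*b_\mu(R)=(1-|\mu|^2)(I-\mu R^*)^{-1}(I-R^*R)(I-\ol\mu R)^{-1},
\]
rearranges to get a lower bound for $\|b_\mu(R)(I-\ol\mu R)h\|$, and then strips off the factor $(I-\ol\mu R)$ by a final triangle inequality. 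You instead spot the purely algebraic cancellation $\phi(S)\xi-\mu\xi=\omega(1-|\mu|^2)S\eta$ with $\eta=(I+\bar a S)^{-1}\xi$, and bound $\|S\eta\|\ge \beta/(1+|\mu|)$ via commutation and $\|I+\bar a S\|\le 1+|\mu|$. Your approach is more elementary in that it never touches adjoints or defect operators, and it delivers the strictly sharper linear bound $(1-|\mu|)\beta-|\mu|$ (your closing computation confirming this is correct). The paper's route, on the other hand, ties the estimate to the classical contraction--defect machinery, which may be more suggestive in contexts where one wants to track $I-T^*T$ explicitly. One small remark: what you call ``multiplicativity'' when asserting $\phi(S)=b_{\lambda_2}(T)$ is really the composition rule for the holomorphic (rational) functional calculus; it is of course standard for M\"obius maps of a contraction, but the word choice could be tightened.
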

\begin{proof}

A direct calculation shows that for any contraction $R\in B(\hil)$ and any $\mu\in \D$, we have
$$
I-b_{\mu}(R)^*b_{\mu}(R)=(1-|\mu|^2)(1-\mu R^*)^{-1}(I-R^*R)(1-\ol{\mu}R)^{-1}
$$
and thus
$$
\langle (I-b_{\mu}(R)^*b_{\mu}(R))(1-\ol{\mu}R)h,(1-\ol{\mu}R)h\rangle=(1-|\mu|^2)\langle (I-R^*R)h,h\rangle
$$
for every $h\in \hil$. This implies that
\begin{align*}
\|b_{\mu}(R)(I-\ol{\mu}R)h\|^2&=\|(I-\ol{\mu}R)h\|^2-\langle (I-b_{\mu}(R)^*b_{\mu}(R))(1-\ol{\mu}R)h,(1-\ol{\mu}R)h\rangle\\
&=\|(I-\ol{\mu}R)h\|^2-(1-|\mu|^2)\langle (I-R^*R)h,h\rangle\\
&=\|(I-\ol{\mu}R)h\|^2-(1-|\mu|^2)(\|h\|^2-\|Rh\|^2)\\
& \geq (\|h\|-|\mu| \|Rh\|)^2-(1-|\mu|^2)(\|h\|^2-\|Rh\|^2)\\
&\geq (1-|\mu|)^2\|h\|^2-(1-|\mu|^2)(\|h\|^2-\|Rh\|^2).
\end{align*}
Note now that if $\mu=b_{\lambda_1}(\lambda_2)$, then $b_{\mu}=b_{\lambda_2}\circ b^{-1}_{\lambda_1}$. Applying the previous inequality with $R=b_{\lambda_1}(T)$, $h=\xi$ and $\mu=b_{\lambda_1}(\lambda_2)$, we find
$$
\|b_{\lambda_2}(T)(I-\ol{\mu}b_{\lambda_1}(T))\xi\|^2\geq (1-|\mu|)^2\|\xi\|^2-(1-|\mu|^2)(\|\xi\|^2-\|b_{\lambda_1}(T)\xi\|^2).
$$
By assumption, this becomes
$$
\|b_{\lambda_2}(T)(I-\ol{\mu}b_{\lambda_1}(T))\xi\|^2\geq (1-|\mu|)^2-(1-|\mu|^2)(1-\beta^2).
$$
Finally, we have
\begin{align*}
\|b_{\lambda_2}(T)\xi\|&\geq \|b_{\lambda_2}(T)(I-\ol{\mu}b_{\lambda_1}(T))\xi\|-|\mu|\|b_{\lambda_2}(T)b_{\lambda_1}(T)\xi\|\\
&\geq \sqrt{(1-|\mu|)^2-(1-|\mu|^2)(1-\beta^2)}-|\mu|
\end{align*}
and this finishes the proof.
\end{proof}

\begin{corollary}\label{c-dim2}
Let $T\in B(\hil)$ be a multiplicity-free operator of class $C_0$ with minimal function $\theta=b_{\lambda_1} b_{\lambda_2}$. 
Consider $$\Psi:H^\infty(T)\to H^\infty (S(\theta))$$ such that $\Psi(u(T))=u(S(\theta))$ for every $u\in H^\infty$. Assume that $\Psi$ is bounded.
Then, there exists an invertible operator $X:\hil\to H(\theta)$ such that $XT=S(\theta) X$ and 
$$
\max\{\|X\|,\|X^{-1}\| \}\leq C(\Psi)
$$
where $C(\Psi)>0$ is a constant depending only on $\Psi$.
\end{corollary}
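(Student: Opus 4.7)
The plan is to adapt the proof of Theorem \ref{t-simalg} specialized to $N=2$, replacing the estimate derived from Lemma \ref{l-psi} with the one from Lemma \ref{l-blambda}; the latter gives lower bounds without involving $\eta$, which is precisely the advantage of the two-dimensional case as foreshadowed in the paragraph preceding Lemma \ref{l-blambda}. First, arguing as in the proof of Corollary \ref{c-simalg}, one observes that $\|b_{\lambda_i}(T)\| \geq \|\Psi\|^{-1}$ for $i=1,2$, using that $\|b_{\lambda_i}\|_{H^\infty/\theta H^\infty}=1$. Theorem \ref{maxvector} and the density of cyclic vectors then yield a cyclic unit $\xi \in \hil$ with $\|b_{\lambda_1}(T)\xi\|\geq \beta_1$ for $\beta_1$ arbitrarily close to $\|b_{\lambda_1}(T)\|$.

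Applying Lemma \ref{l-blambda} with $\mu = b_{\lambda_2}(\lambda_1)$ then produces a lower bound $\|b_{\lambda_2}(T)\xi\|\geq \beta_2$. Once such a pair of estimates is in place, the dilation argument in the proof of Proposition \ref{p-dilationangle}, applied with $N=2$, $j=0$, $k=1$, gives the angle estimate
\[
|\langle \xi, b_{\lambda_1}(T)\xi\rangle| \leq \sqrt{1-\beta_2^2}\, \|b_{\lambda_1}(T)\xi\|.
\]
A parallel cyclic vector $\tilde\xi \in H(\theta)$ for $S(\theta)$ is constructed with analogous parameters; since $\|u(S(\theta))\|=\|u\|_{H^\infty/\theta H^\infty}\geq \|u(T)\|$, the $S(\theta)$-side bounds are cleaner. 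The intertwiner $X: \hil \to H(\theta)$ is then defined exactly as in Theorem \ref{t-simalg} by $X u(T)\xi = u(S(\theta))\tilde\xi$, and the Gram-matrix manipulations from that proof, specialized to $N=2$, control $\|X\|$ and $\|X^{-1}\|$. The key simplification is that the constraint $\beta > \sqrt{1-1/(N-1)^2}$ degenerates to $\beta > 0$ for $N=2$, so it suffices that $\beta_1, \beta_2$ and their analogues on the $S(\theta)$-side be strictly positive.

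The main obstacle is ensuring that the bound from Lemma \ref{l-blambda} remains positive and can be controlled by a function of $\|\Psi\|$ alone. When $|b_{\lambda_2}(\lambda_1)|$ is comparable to $1$, the quantity $\sqrt{(1-|\mu|)^2-(1-|\mu|^2)(1-\beta_1^2)}-|\mu|$ may fail to be positive if $\beta_1$ is only close to $\|\Psi\|^{-1}$; in that regime, one must select $\xi$ more carefully, for instance by maximizing $\min(\|b_{\lambda_1}(T)\xi\|,\|b_{\lambda_2}(T)\xi\|)$ over cyclic unit vectors rather than just $\|b_{\lambda_1}(T)\xi\|$. A density argument in the $G_\delta$ of cyclic vectors, combined with the symmetric application of Lemma \ref{l-blambda} exchanging the roles of $\lambda_1$ and $\lambda_2$, should then yield simultaneous lower bounds for both $\|b_{\lambda_1}(T)\xi\|$ and $\|b_{\lambda_2}(T)\xi\|$ depending only on $\|\Psi\|$, and the rest of the argument proceeds as described above.
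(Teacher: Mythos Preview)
Your outline is essentially the paper's strategy: use Lemma \ref{l-blambda} in place of Lemma \ref{l-psi} to avoid the $\eta$-dependence, reproduce the dilation angle estimate from Proposition \ref{p-dilationangle} for $N=2$, and build $X$ as in Theorem \ref{t-simalg}. That part is fine. The gap is exactly where you flag it yourself, and your proposed repair does not close it.

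When $|\mu|=|b_{\lambda_1}(\lambda_2)|$ is bounded away from $0$, the expression $\sqrt{(1-|\mu|)^2-(1-|\mu|^2)(1-\beta^2)}-|\mu|$ from Lemma \ref{l-blambda} can be negative even for $\beta$ close to $\|\Psi\|^{-1}$, and applying the lemma in the other direction gives the \emph{same} formula with the same $|\mu|$, so symmetry buys nothing. Your suggestion to maximize $\min(\|b_{\lambda_1}(T)\xi\|,\|b_{\lambda_2}(T)\xi\|)$ over cyclic unit vectors is not supported by any argument: the hypotheses only give $\|b_{\lambda_i}(T)\|\geq\|\Psi\|^{-1}$ for each $i$ separately, and no mechanism in your proposal produces a single $\xi$ on which both are simultaneously large with a bound depending only on $\|\Psi\|$. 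The phrase ``should then yield'' is precisely the missing step.

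The paper handles this by a case split on $|\mu|$. One first chooses a threshold $r>0$, depending only on $\|\Psi\|$, small enough that for $|\mu|<r$ the bound from Lemma \ref{l-blambda} is positive and comparable to $\beta$ (concretely, the paper arranges $\beta'>\beta/2$ and $1-2|\mu|>1/2$). In that regime your argument goes through. For $|\mu|\geq r$ the two zeros are uniformly separated (a two-point Carleson condition with constant controlled by $r$, hence by $\|\Psi\|$), and the similarity with the required norm control is already known from Proposition~3.2 of \cite{clouatre}; Lemma \ref{l-blambda} is not used at all in this branch. On the $S(\theta)$ side the paper also makes the concrete choice $\zeta=\kappa_{\lambda_1}/\|\kappa_{\lambda_1}\|_{H^2}$, for which $\|b_{\lambda_2}(S(\theta))\zeta\|=1$ exactly, which streamlines the estimates.
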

\begin{proof}
Set $\mu=b_{\lambda_1}(\lambda_2)$. Fix some number $0<\e< \|\Psi\|^{-1} $ and set $\beta=\|\Psi\|^{-1}-\e.$
Consider
$$
\beta'=\sqrt{(1-|\mu|)^2-(1-|\mu|^2)(1-\beta^2)}- |\mu|
$$
and
$$
\gamma=1-2 |\mu|.
$$
There exits a positive number $r>0$ depending only on $\Psi$ such that
$\beta'> \beta/2$ and $\gamma> 1/2$ if $|\mu|<r$.
We distinguish two cases. First, we assume that $|\mu|\geq r$. This corresponds to the case of ``uniformly separated roots", for which the existence of an invertible operator $X$ with the required properties is well-known (see for instance Proposition 3.2 of \cite{clouatre}). We turn now to the case where
$|\mu|< r$. The unit vector $\zeta=\kappa_{\lambda_1}/\| \kappa_{\lambda_1}\|_{H^2}\in H(\theta)$ is cyclic for $S(\theta)$ and
$$
\|b_{\lambda_2}(S(\theta))\zeta\|=1
$$
(these facts are easily verified, and they can be found in Lemma 3.2.1 and Corollary 3.2.4 of \cite{arveson}.)
In particular, we have
$$
\|b_{\lambda_2}(S(\theta))\|=1
$$
and thus
$$
\|b_{\lambda_2}(T)\|\geq \|\Psi\|^{-1}\|b_{\lambda_2}(S(\theta))\|=\|\Psi\|^{-1}.
$$
By Theorem \ref{maxvector}, we can find a unit vector $\xi\in \hil$ which is cyclic for $T$ with the property that
$$
\|b_{\lambda_2}(T)\xi\|\geq \beta.
$$
Note also that Lemma \ref{l-blambda} implies that
$$
\|b_{\lambda_1}(T)\xi\|\geq \beta'
$$
and
$$
\|b_{\lambda_1}(S(\theta))\zeta\|\geq \gamma.
$$
Repeating the argument done in the proof Proposition \ref{p-dilationangle} we find
$$
|\langle \xi,b_{\lambda_1}(T)\xi\rangle|\leq \sqrt{1-\beta^2}\|\xi \|\|b_{\lambda_1}(T)\xi\|
$$
along with
$$
\langle \zeta,b_{\lambda_1}(S(\theta))\zeta\rangle=0.
$$
By Lemma \ref{l-cyclic}, the set $\{\xi,b_{\lambda_1}(T)\xi\}$ forms a basis for $\hil$ while $\{\zeta,b_{\lambda_1}(S(\theta))\zeta\}$ forms a basis for $H(\theta)$. As in the proof of Theorem \ref{t-simalg}, we define $X:\hil\to H(\theta)$ as
$$
X(c_0 \xi+c_1 b_{\lambda_1}(T)\xi)=c_0 \zeta+ c_1 b_{\lambda_1}(S(\theta))\zeta
$$
where $c_0,c_1\in \C$. This operator satisfies $XT=S(\theta)X$. 
We have
\begin{align*}
&\left\|c_0 \xi+c_1 b_{\lambda_1}(T)\xi \right\|^2\\
&\geq |c_0|^2+|c_1|^2\|b_{\lambda_1}(T)\xi \|^2-2|c_0| |c_1|\sqrt{1-\beta^2}\|\xi \|\|b_{\lambda_1}(T)\xi\|\\
&\geq (1-\sqrt{1-\beta^2})( |c_0|^2+|c_1|^2\|b_{\lambda_1}(T)\xi \|^2)\\
&\geq \beta'^{2}(1-\sqrt{1-\beta^2})(|c_0|^2+|c_1|^2)\\
&\geq \frac{1}{2}\beta'^{2}(1-\sqrt{1-\beta^2})\|c_0 \zeta+c_1 b_{\lambda_1}(S(\theta))\zeta \|^2
\end{align*}
and 
\begin{align*}
\left\|c_0 \xi+c_1 b_{\lambda_1}(T)\xi \right\|^2&\leq 2(\|c_0 \xi\|^2+\|c_1  b_{\lambda_1}(T)\xi\|^2)\\
&\leq2 (|c_0|^2+|c_1|^2)\\
&\leq2 \gamma^{-2}\|c_0 \zeta+c_1 b_{\lambda_1}(S(\theta))\zeta \|^2.
\end{align*}
Consequently, we find
$$
\|X\|^2\leq  \left(\frac{1}{2}\beta'^{2}(1-\sqrt{1-\beta^2})\right)^{-1}
$$
$$
\|X^{-1}\|^2\leq 2\gamma^{-2}
$$
and using the fact that $\beta'> \beta/2$ and $\gamma>1/2$, we get
$$
\|X\|^2\leq  \left(\frac{1}{8}\beta^{2}(1-\sqrt{1-\beta^2})\right)^{-1}
$$
$$
\|X^{-1}\|^2\leq 8
$$
which completes the proof.
\end{proof}

\section{The generalized Carleson condition}

Let us first recall a definition. A sequence of distinct points $\{\lambda_j\}_j\subset \D$ is said to
satisfy the Carleson condition  if
\begin{equation}\label{unifsep}
\inf_{k} \prod_{j\neq k} \left|\frac{\lambda_{j}-\lambda_{k}}{1-\ol{\lambda_j}\lambda_k}\right|>0.
\end{equation}
The main focus of the work done in \cite{clouatre} was the case where the minimal function of a multiplicity-free operator of class $C_0$ is a Blaschke product with zeros forming such a sequence.
The purpose of this section is to indicate that the following result (see \cite{clouatre}) carries over to a more general setting. Although the result itself is known, we hope our approach (more precisely Lemma \ref{group}) might hold some independent interest.

\begin{theorem}\label{equivalence2}
Let $\{\lambda_j\}_j\subset \D$ be a sequence of distinct points and let $\theta \in H^\infty$ be the corresponding Blaschke product. The following statements are
equivalent:
\begin{enumerate}
\item[\rm{(i)}] $\{\lambda_j\}_j\subset \D$ satisfies the Carleson condition
\item[\rm{(ii)}] every multiplicity-free operator $T$ of class $C_0$ with minimal function $\theta$ is similar to $S(\theta)$.
\end{enumerate}
\end{theorem}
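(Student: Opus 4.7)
The plan is to establish the two implications separately. The substantive work lies in $(i) \Rightarrow (ii)$ and uses Corollary \ref{c-simalg} as the engine, applied block by block after a suitable grouping of the zeros.

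\textbf{Direction $(ii) \Rightarrow (i)$.} I would argue via a diagonal model. Consider
$$
T_0 = \bigoplus_j S(b_{\lambda_j})
$$
on $\bigoplus_j H(b_{\lambda_j})$; since each $H(b_{\lambda_j})$ is one-dimensional with $S(b_{\lambda_j})$ acting as multiplication by $\lambda_j$, the operator $T_0$ is multiplicity-free of class $C_0$ with minimal function $\theta$. If $T_0$ is similar to $S(\theta)$, conjugation by the similarity gives
$$
\|u\|_{H^\infty/\theta H^\infty} = \|u(S(\theta))\| \leq C \|u(T_0)\| = C \sup_j |u(\lambda_j)|
$$
for every $u \in H^\infty$. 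This uniform interpolation inequality is, by the classical Carleson interpolation theorem, equivalent to $\{\lambda_j\}$ being a Carleson sequence.

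\textbf{Direction $(i) \Rightarrow (ii)$.} The plan is to reduce to the finite-dimensional situation handled by Corollary \ref{c-simalg} via a grouping argument. I would first invoke the forthcoming Lemma \ref{group} to partition $\{\lambda_j\}$ into finite blocks $\Lambda_n$ and set $\theta_n = \prod_{\lambda \in \Lambda_n} b_\lambda$, so that $\theta = \prod_n \theta_n$ and the partition simultaneously ensures: (a) each $\theta_n$ has the quantity $\eta_n$ appearing in Corollary \ref{c-simalg} uniformly small enough to meet the $\beta$-constraint with a common $\beta < 1$; and (b) the sequence $\{\theta_n\}_n$ satisfies the generalized Carleson condition, which in turn forces $H(\theta) = \bigoplus_n H(\theta_n)$ and (via Lemma \ref{l-lcmspan}) $\hil = \bigoplus_n \ker \theta_n(T)$ to be unconditional topological direct sums with uniformly bounded block projections.

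With such a partition in hand I would apply Corollary \ref{c-simalg} on each block. On $\hil_n = \ker \theta_n(T)$, the restriction $T_n = T|\hil_n$ is a multiplicity-free $C_0$ operator with minimal function $\theta_n$ (a finite Blaschke product), and the natural map $\Psi_n : H^\infty(T_n) \to H^\infty(S(\theta_n))$ has $\|\Psi_n\|$ controlled uniformly thanks to the direct-sum decomposition from (b). Corollary \ref{c-simalg}, combined with (a), then yields invertible intertwiners $X_n : \hil_n \to H(\theta_n)$ satisfying $X_n T_n = S(\theta_n) X_n$ with $\max(\|X_n\|, \|X_n^{-1}\|)$ uniformly bounded in $n$. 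Assembling $X = \bigoplus_n X_n$ and appealing to (b) on both the $T$-side and the $S(\theta)$-side produces a bounded invertible $X : \hil \to H(\theta)$ implementing the similarity $T \sim S(\theta)$.

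\textbf{Main obstacle.} The crux of the argument is Lemma \ref{group}: manufacturing a single partition that simultaneously delivers (a) and (b). Intuitively one must absorb clusters of mutually close zeros into a common block (keeping $\eta_n$ small) while guaranteeing that different blocks remain well separated (so that the generalized Carleson condition survives the grouping). The classical Carleson condition ought to supply enough geometric room for this balancing act, but controlling both scales at once is delicate and is where the new technique advertised in the introduction must do its work. A subsidiary concern, more bookkeeping than genuine obstacle, is the final assembly step: verifying that the uniform per-block estimates combine into the correct global norm bounds, which depends on (b) being transported correctly through the Sz.-Nagy--Foias calculus for $T$ as well as through the concrete geometry of $H(\theta)$.
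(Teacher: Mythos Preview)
The paper does not prove Theorem~\ref{equivalence2} directly; it cites \cite{clouatre} and then develops machinery (Lemma~\ref{group}, Theorem~\ref{directsum}) that reproves $(i)\Rightarrow(ii)$ once one takes the trivial factorization $\theta_n=b_{\lambda_n}$. Your direction $(ii)\Rightarrow(i)$ via the diagonal model is correct and matches the argument for $(iv)\Rightarrow(i)$ in Theorem~\ref{t-equivalencecarleson}.

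Your $(i)\Rightarrow(ii)$, however, has genuine gaps. First, Lemma~\ref{group} does not do what you describe: it does not partition the zeros. Given an \emph{already chosen} factorization $\theta=\prod_n\theta_n$ satisfying the generalized Carleson condition, it shows that the operators $k_A=2\phi_A(T)-I$ form a bounded abelian group, which together with Dixmier's Theorem~\ref{unitdix} yields the direct-sum splitting of Theorem~\ref{directsum}. In the simple-zero Carleson case the factorization is the obvious one, $\theta_n=b_{\lambda_n}$; then each block is one-dimensional, the per-block similarities are trivially unitary, and Corollary~\ref{c-simalg} is never invoked. Second, even if one insisted on multi-point blocks, your claim that $\|\Psi_n\|$ is controlled uniformly thanks to the direct-sum decomposition does not go through: the decomposition only gives upper bounds $\|u(T_n)\|\le C\|u(T)\|$, whereas Corollary~\ref{c-simalg} needs the \emph{lower} bound $\|u(T_n)\|\ge\beta\|u\|_{H^\infty/\theta_n H^\infty}$ with $\beta$ close to~$1$. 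For an arbitrary multiplicity-free $T$ no such bound is available a priori --- it is exactly the extra hypothesis imposed in Theorem~\ref{t-similargc}, not a consequence of the Carleson condition. Your condition~(a) is likewise unattainable in general: the factor $(1-\max\{|\lambda|,|\mu|\}^2)^{-1/2}$ in $\eta_n$ forces $\eta_n$ to be large whenever a block contains two distinct zeros near the boundary, and for an infinite Blaschke product this cannot be avoided.
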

The relevant concept for us will be the following (see \cite{nikOP} for more details). Let $\{\theta_n\}_n\subset H^\infty$ be a sequence of inner functions. It will be convenient throughout to use the following notation: given a subset $A \subset \N$, we write
$$
\theta_{A}=\prod_{n\in A}\theta_n.
$$
We say that the sequence $\{\theta_n\}_n$ satisfies the generalized Carleson condition (with constant $C>0$) if for any
finite set $A \subset \N$ there are functions $f_{A},
g_{A}\in H^\infty$ satisfying
$$
f_{A}\theta_{A}+g_{A}\frac{\theta}{\theta_{A}}=1
$$
with $\|f_{A}\|_{H^\infty}\leq C$ and $\|g_{A}\|_{H^\infty}\leq C$.
It is a straightforward consequence of the definition that the functions $\theta_A$ and $\theta_B$ have no common inner divisor if $A$ and $B$ are disjoint and at least one of them is finite. Moreover, it is well-known that in the case where $\theta_n$ is simply a Blaschke factor, the generalized Carleson condition is equivalent to the classical Carleson condition (\ref{unifsep}) (see Lemma 3.2.18 in \cite{nikOP}).


Let $\{\theta_n\}_n \subset H^\infty$ be a sequence of inner functions satisfying the generalized Carleson condition with constant $C>0$, and set $\theta=\prod_n \theta_n\in H^\infty$. We have for every finite or cofinite subset $A\subset \N$ a pair of functions $f_A,g_A\in H^\infty$ satisfying
\begin{equation}\label{coronaAB}
f_{A}\theta_A+g_{A}\frac{\theta}{\theta_{A}}=1
\end{equation}
and
$\|f_A\|_{H^\infty}\leq C, \|g_A\|_{H^\infty}\leq C$. Set $\phi_A=g_{A}\theta/\theta_A$ and notice that
\begin{equation}\label{normphi}
\|\phi_A\|_{H^\infty}\leq C.
\end{equation}
We use the usual notation for the symmetric difference of two sets:
$$
A\triangle B=(A\setminus B) \cup (B\setminus A).
$$

\begin{lemma}\label{group}
Let $T\in B(\hil)$ be an operator of class $C_0$ with minimal function $\theta=\prod_n \theta_n$ where $\{\theta_n\}_n$ satisfies the generalized Carleson condition. For
every finite or cofinite subset $A\subset \N$, define
$$
k_A=2\phi_A(T)-I \in B(\hil).
$$
Then $\phi_A (T)\phi_B(T)=\phi_{A\cap B}(T)$, $k_A=k_A^{-1}$ and $k_A k_B=k_{\N\setminus (A\triangle B)}$. Moreover,
$$G=\{k_A:A\subset \N \text{ finite or cofinite }\}$$ is an abelian multiplicative subgroup of $B(\hil)$.
\end{lemma}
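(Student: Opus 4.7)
The guiding idea is that, modulo $\theta H^\infty$, the function $\phi_A$ behaves as an indicator function of $A$: from $1 - \phi_A = f_A \theta_A$ one reads off $\theta_A \mid (1 - \phi_A)$, and from $\phi_A = g_A \theta_{\N \setminus A}$ one reads off $\theta_{\N \setminus A} \mid \phi_A$. Each of the three claimed identities is then a congruence in $H^\infty / \theta H^\infty$ to be established by a common divisibility argument, after which passing through the functional calculus concludes everything.

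The crucial step is the multiplicative formula $\phi_A(T)\phi_B(T) = \phi_{A \cap B}(T)$. To establish it I would partition $\N$ into $P = A \cap B$, $Q = A \triangle B$ and $R = \N \setminus (A \cup B)$, so that $\theta = \theta_P \theta_Q \theta_R$. A brief case analysis, depending on whether each of $A, B$ is finite or cofinite, shows that among any two of $P, Q, R$ at least one set is finite, so the stated consequence of the generalized Carleson condition guarantees that $\theta_P, \theta_Q, \theta_R$ are pairwise coprime. It therefore suffices to check that $\phi_A \phi_B - \phi_{A \cap B}$ is divisible separately by each of these three factors: modulo $\theta_P$ each of $\phi_A, \phi_B, \phi_{A \cap B}$ reduces to $1$; modulo $\theta_R$ they all reduce to $0$; and modulo the further factorization $\theta_Q = \theta_{A \setminus B} \theta_{B \setminus A}$ one splits again into two coprime factors and checks each directly, getting $\phi_A \phi_B \equiv 1 \cdot 0$ modulo $\theta_{A \setminus B}$ and $\phi_A \phi_B \equiv 0 \cdot 1$ modulo $\theta_{B \setminus A}$, with $\phi_{A \cap B} \equiv 0$ in both. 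Since $\theta(T) = 0$, the congruence lifts to the desired operator identity. This pairwise coprimality for $\theta_P, \theta_Q, \theta_R$ is the main obstacle and is precisely what forces the finite/cofinite restriction built into the definition of $G$.

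The remaining claims then follow quickly. Setting $A = B$ gives $\phi_A(T)^2 = \phi_A(T)$, so $\phi_A(T)$ is idempotent and $k_A^2 = 4\phi_A(T)^2 - 4\phi_A(T) + I = I$, which yields $k_A = k_A^{-1}$. Expanding $k_A k_B$ and using the multiplicative formula recasts $k_A k_B = k_{\N \setminus (A \triangle B)}$ as the congruence
$$
\phi_{\N \setminus (A \triangle B)} \equiv 1 - \phi_A - \phi_B + 2 \phi_{A \cap B} \pmod{\theta H^\infty},
$$
which is nothing but the lifted indicator-function identity $\mathbf{1}_{\N \setminus (A \triangle B)} = 1 - \mathbf{1}_A - \mathbf{1}_B + 2\mathbf{1}_{A \cap B}$. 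The same three-piece divisibility check handles this, after noting that $\theta_P \theta_R \mid 1 - \phi_{\N \setminus (A \triangle B)}$ and $\theta_Q \mid \phi_{\N \setminus (A \triangle B)}$. With these three identities in hand, $G$ is manifestly an abelian subgroup of $B(\hil)$: $I = k_\N \in G$ since $\phi_\N \equiv 1 \pmod{\theta H^\infty}$, closure under products is immediate (and $\N \setminus (A \triangle B)$ is again finite or cofinite), each $k_A$ is its own inverse, and commutativity is just $A \triangle B = B \triangle A$.
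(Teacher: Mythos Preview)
Your argument is correct and rests on the same underlying mechanism as the paper's proof: both exploit the two divisibility relations $\theta_A \mid (1-\phi_A)$ and $\theta_{\N\setminus A}\mid \phi_A$ together with the coprimality of $\theta_S$ and $\theta_{S'}$ for disjoint $S,S'$ with one finite, and then pass through the functional calculus via $\theta(T)=0$. The difference is organizational. The paper proceeds by direct algebraic manipulation of the Bezout identities: it multiplies the relations for $A$ and $B$ to produce $1=h\theta_{A\cap B}+\phi_A\phi_B$, subtracts the relation for $A\cap B$, and then checks the two complementary divisibilities $\theta_{A\cap B}\mid(\phi_A\phi_B-\phi_{A\cap B})$ and $\theta_{\N\setminus(A\cap B)}\mid(\phi_A\phi_B-\phi_{A\cap B})$; the identity $k_Ak_B=k_{\N\setminus(A\triangle B)}$ is handled by a separate but analogous computation. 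Your indicator-function framing, with the partition $\N=P\cup Q\cup R$ into atoms of the Boolean algebra generated by $A$ and $B$, packages all of this uniformly: every congruence reduces to evaluating indicator identities on each atom, and the finite/cofinite bookkeeping is done once rather than repeated. This is cleaner and makes the role of the finite/cofinite hypothesis more transparent.

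One point the paper treats explicitly that you leave implicit: well-definedness of $\phi_A(T)$, since $g_A$ is not unique. Your two divisibility relations in fact pin down $\phi_A$ modulo $\theta H^\infty$ (any two choices agree modulo $\theta_A$ and modulo $\theta_{\N\setminus A}$, hence modulo $\theta$), so this is covered by your framework, but it would be worth saying so in a sentence.
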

\begin{proof}
We first need to check that $k_A$ is well-defined since the function $g_A$ is not uniquely determined. Assume that $g_1$ and $g_2$ are two functions in $H^\infty$ which are candidates for $g_A$, meaning that they satisfy
$$
f_{1}\theta_A+g_{1}\frac{\theta}{\theta_{A}}=1
$$
and
$$
f_{2}\theta_A+g_{2}\frac{\theta}{\theta_{A}}=1
$$
for some functions $f_1,f_2\in H^\infty$.
Then, we find
$$
(g_1-g_2)\frac{\theta}{\theta_{A}}=(f_2-f_1)\theta_A.
$$
We see that $\theta_A$ must divide the left-hand side, and thus it must divide $g_1-g_2$ seeing as it has no common inner factor with $\theta/\theta_{A}$. Therefore, $\theta$ divides $(g_1-g_2)\theta/\theta_{A}$ and we have $(g_1-g_2)(\theta/\theta_A)(T)=0$. In other words, $\phi_A(T)$ and $k_A$ are well-defined. Notice now that
$$
\phi_A^2=\left(g_A\frac{\theta}{\theta_{A}}\right)^2=g_A\frac{\theta}{\theta_{A}} \left(1-f_A\theta_A\right)=\phi_A-f_A g_A \theta
$$
which implies that $\phi_A(T)^2=\phi_A(T)$.
A straightforward calculation now yields that $k_A^{-1}=k_A$.
Using (\ref{coronaAB}), we find that
\begin{equation}\label{eq1phiAB}
1=f_A f_B \theta_A \theta_B+f_A g_B \theta_A
\frac{\theta}{\theta_{B}}+f_B g_A \theta_B
\frac{\theta}{\theta_{A}}+\phi_A \phi_B\\
=h \theta_{A\cap B}+\phi_A \phi_B
\end{equation}
for some $h\in H^\infty$. On the other hand, we have
\begin{equation}\label{eq2phiAB}
f_{A\cap B}\theta_{A\cap B}+\phi_{A\cap B}=1.
\end{equation}
Notice now that
\begin{align*}
\theta_A \theta_B&=\prod_{n\in A}\theta_n \prod_{n\in  B}\theta_n\\
&=\prod_{n\in B\setminus A}\theta_n\prod_{n\in A\setminus B}\theta_n\left(\prod_{n\in A\cap B}\theta_n\right)^2\\
&=\prod_{n\in A\cup B}\theta_n\prod_{n\in A\cap B}\theta_n\\
&=\theta_{A \cap B}\theta_{A \cup B}.
\end{align*}
Consequently, using (\ref{eq1phiAB}) and (\ref{eq2phiAB}) we find
\begin{align*}
\frac{\theta}{\theta_{A\cap B}}\left( g_A g_B \frac{\theta}{\theta_{A \cup B}}-g_{A\cap B}\right)&=g_A g_B \frac{\theta}{\theta_{A}}\frac{\theta}{\theta_{B}} -g_{A\cap B}\frac{\theta}{\theta_{A\cap B}}\\
&=\phi_A \phi_B-\phi_{A\cap B}\\
&=(f_{A\cap B}-h)\theta_{A\cap B}.
\end{align*}
Therefore, $\theta_{A\cap B}$ divides the left-hand side, and as before since $\theta_{A\cap B}$ and $\theta/\theta_{A\cap B}$ have no common inner factor  we conclude that $\theta$ divides $\phi_A \phi_B-\phi_{A\cap B}$, which in turn implies
$$
\phi_A (T)\phi_B(T)=\phi_{A\cap B}(T).
$$
We now proceed to show the identity $k_A k_B=k_{\N\setminus (A\triangle B)}$ in a similar fashion.
First we note that
$$
k_A k_B=2(2\phi_A \phi_B-\phi_A-\phi_B+1)(T)-I
$$
so we need to establish
$$
(2\phi_A \phi_B-\phi_A-\phi_B+1)(T)=\phi_{\N\setminus (A\triangle B)}(T)
$$
which is equivalent to showing that the function $\theta$ divides the function
$$
2\phi_A \phi_B-\phi_A-\phi_B+1-\phi_{\N\setminus (A\triangle B)}.
$$
But we have
\begin{align*}
2\phi_A \phi_B-\phi_A-\phi_B+1 &= 2\phi_A \phi_B-\phi_A+f_B\theta_B\\
&= \frac{\theta}{\theta_A}g_A(2\phi_B-1)+f_B\theta_B
\end{align*}
which is clearly divisible by $\theta_{B\setminus A}$. By symmetry, we also find that it is divisible by $\theta_{A\setminus B}$. Since these last two inner functions do not have a common inner factor, we conclude that
$$
2\phi_A \phi_B-\phi_A-\phi_B+1
$$
is divisible by $\theta_{(A\setminus B)\cup (B\setminus A)}=\theta_{A\triangle B}.$
Thus,
$$
2\phi_A \phi_B-\phi_A-\phi_B+1-\phi_{\N\setminus (A\triangle B)}.
$$
is divisible by $\theta_{A\triangle B}$.
On the other hand, using (\ref{coronaAB}) we can write
\begin{align*}
2\phi_A \phi_B-\phi_A-\phi_B+1&= \phi_A(\phi_B-1)+\phi_B(\phi_A-1)+1\\
&=-\phi_A f_B\theta_B-\phi_Bf_A\theta_A+1.
\end{align*}
Therefore, another application of (\ref{coronaAB}) yields
\begin{align*}
2\phi_A \phi_B-\phi_A-\phi_B+1-\phi_{\N\setminus (A\triangle B)}&=1-f_A\theta_A\phi_B-f_B\theta_B\phi_A\\
&+(f_{\N \setminus (A\triangle B)}\theta_{\N \setminus (A\triangle B)}-1)\\
&=-f_Ag_B\theta_A\frac{\theta}{\theta_B}-f_Bg_A\theta_B\frac{\theta}{\theta_A}\\
&+f_{\N \setminus (A\triangle B)}\theta_{\N \setminus (A\triangle B)}
\end{align*}
which is easily checked to be divisible by $\theta_{\N \setminus (A\triangle B)}$. Coupled with the previously established divisibility relation, we conclude that $\theta$ divides
$$
2\phi_A \phi_B-\phi_A-\phi_B+1-\phi_{\N\setminus (A\triangle B)}
$$
which in turn implies that $k_A k_B=k_{\N\setminus (A\triangle B) }$.
Note now that if $A=\N$, then we can choose $f_A=0,g_A=1$ and we get $I=k_{\N}$. To verify that $G$ is an abelian multiplicative subgroup of $B(\hil)$, it therefore only remains to check that $A\triangle B$ is finite or cofinite whenever $A$ and $B$ are, but this is elementary.
\end{proof}

Recall now a classical result of Dixmier (see \cite{dixmier} and \cite{paulsen}).
\begin{theorem}\label{unitdix}
Let $G$  be an abelian multiplicative subgroup of $B(\hil)$. Assume that 
$$
C=\sup \{\|k\|_{B(\hil)}:k\in G\}<\infty.
$$
Then, there exists an invertible operator $X\in
B(\hil)$ such that $XkX^{-1}$ is unitary for every $k\in G$ and with the property that 
$$
\max\{\|X\|,\|X^{-1}\|\}\leq C.
$$

%
\end{theorem}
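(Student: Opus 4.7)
The plan is to run the classical ``unitarization'' argument via an invariant mean, which is available because the abelian group $G$ is amenable. The starting observation is that $\|k^{-1}\|\leq C$ as well, since $k^{-1}\in G$, and therefore $C^{-1}\|h\|\leq \|kh\|\leq C\|h\|$ for every $k\in G$ and every $h\in \hil$.

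First I would produce an invariant mean on $G$. Since $G$ is abelian, the Markov--Kakutani fixed point theorem applied to the (commuting) left-translation action of $G$ on the weak-$*$ compact convex set of states on $\ell^\infty(G)$ yields a positive linear functional $m:\ell^\infty(G)\to \C$ with $m(1)=1$ and $m(f\circ L_{k_0})=m(f)$ for every $k_0\in G$, where $L_{k_0}(k)=k_0 k$. For fixed $h_1,h_2\in \hil$ the function $k\mapsto \langle kh_1,kh_2\rangle$ is bounded by $C^2\|h_1\|\|h_2\|$, hence lies in $\ell^\infty(G)$, and I can define
$$
[h_1,h_2]=m_k\bigl(\langle kh_1,kh_2\rangle\bigr).
$$
This is a sesquilinear form on $\hil$. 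The two-sided bound on $\|kh\|$ yields
$$
C^{-2}\|h\|^2\leq [h,h]\leq C^2\|h\|^2,
$$
so there exists a unique positive invertible operator $P\in B(\hil)$ with $[h_1,h_2]=\langle Ph_1,h_2\rangle$ and $C^{-2}I\leq P\leq C^2 I$.

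The translation invariance of $m$ gives, for every $k_0\in G$,
$$
[k_0 h_1,k_0 h_2]=m_k\bigl(\langle kk_0 h_1,kk_0 h_2\rangle\bigr)=m_k\bigl(\langle kh_1,kh_2\rangle\bigr)=[h_1,h_2],
$$
which is the same as the operator identity $k_0^* P k_0=P$. Setting $X=P^{1/2}$, a direct calculation shows
$$
(XkX^{-1})^*(XkX^{-1})=P^{-1/2}k^* P k P^{-1/2}=P^{-1/2}P P^{-1/2}=I
$$
for every $k\in G$, and since $XkX^{-1}$ is invertible, it is unitary. From $C^{-2}I\leq P\leq C^2 I$ we read off $\|X\|=\|P^{1/2}\|\leq C$ and $\|X^{-1}\|=\|P^{-1/2}\|\leq C$, which is the required norm bound.

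The only non-trivial ingredient is the existence of the invariant mean, which is the amenability step and the main obstacle if one wishes a self-contained argument; the remaining checks are formal. One small caveat is that $G$ is only assumed to be a subgroup (not necessarily a topological group with any continuity), but $\ell^\infty(G)$ with discrete $G$ is exactly the correct setting, so no continuity issues intervene.
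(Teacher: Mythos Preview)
Your argument is correct and is precisely the classical Dixmier unitarization proof via an invariant mean; the paper does not supply its own proof of this theorem but merely cites \cite{dixmier} and \cite{paulsen}, where exactly this argument appears. There is nothing to compare: you have reproduced the standard proof that the paper is quoting.
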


Combining Lemma \ref{group} and Theorem \ref{unitdix}, we obtain (see Proposition 3.2 of \cite{clouatre} for more details) the following theorem which was proved in \cite{Vas2} by different means (see also \cite{NikVas} for an English translation).

\begin{theorem}\label{directsum}
Let $T\in B(\hil)$ be an operator of class $C_0$. Assume that the minimal function of $T$ is $\theta=\prod_{n}\theta_n$ where $\{\theta_n\}_n$ satisfies the generalized Carleson condition with constant $C>0$. Then, there exists an invertible operator $Y$ such that
$$
YTY^{-1}=\bigoplus_{n}T|\ker \theta_n(T)
$$
and with the property that $\max\{\|Y\|, \|Y^{-1}\|\}\leq (2C+1)^2$.
\end{theorem}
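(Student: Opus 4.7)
The plan is to apply Dixmier's unitarization theorem (Theorem \ref{unitdix}) to the abelian group $G$ furnished by Lemma \ref{group}. By (\ref{normphi}) and contractivity of the $H^\infty$ functional calculus, every $k_A\in G$ satisfies $\|k_A\|=\|2\phi_A(T)-I\|\leq 2C+1$. Theorem \ref{unitdix} then produces an invertible $X\in B(\hil)$ with $\max\{\|X\|,\|X^{-1}\|\}\leq 2C+1$ such that $Xk_AX^{-1}$ is unitary for every $A$.

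Since $k_{\{n\}}^2=I$, each $Xk_{\{n\}}X^{-1}$ is a self-adjoint unitary, so
$$
P_n:=X\phi_{\{n\}}(T)X^{-1}=\tfrac{1}{2}(Xk_{\{n\}}X^{-1}+I)
$$
is an orthogonal projection. The identity $\phi_{\{m\}}(T)\phi_{\{n\}}(T)=\phi_\emptyset(T)=0$ for $m\neq n$ (from Lemma \ref{group}) gives $P_mP_n=0$, so the family $\{P_n\}$ is pairwise orthogonal. The corona equation $f_{\{n\}}\theta_n+\phi_{\{n\}}=1$ shows $\phi_{\{n\}}(T)$ acts as the identity on $\ker\theta_n(T)$, while $\theta_n\phi_{\{n\}}=g_{\{n\}}\theta$ forces its range into $\ker\theta_n(T)$; hence $P_n(\hil)=X\ker\theta_n(T)$. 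Applying Lemma \ref{l-lcmspan} to $\{\theta_n\}$ then gives $\bigvee_n\ker\theta_n(T)=\hil$, so $\sum_n P_n=I$ strongly.

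With this resolution of the identity in hand, define $Y:\hil\to\bigoplus_n\ker\theta_n(T)$ by $Yh=(\phi_{\{n\}}(T)h)_n$, with inverse $(h_n)\mapsto\sum_n h_n$, which is well-defined because the vectors $Xh_n\in P_n(\hil)$ are pairwise orthogonal. Pythagoras applied to $\sum_n P_n=I$ yields
$$
\|Yh\|^2=\sum_n\|X^{-1}P_nXh\|^2\leq \|X^{-1}\|^2\sum_n\|P_nXh\|^2=\|X^{-1}\|^2\|Xh\|^2\leq (2C+1)^4\|h\|^2,
$$
and the symmetric estimate
$$
\left\|\sum_n h_n\right\|^2\leq \|X^{-1}\|^2\sum_n\|Xh_n\|^2\leq (2C+1)^4\sum_n\|h_n\|^2
$$
gives $\|Y^{-1}\|\leq(2C+1)^2$. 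The intertwining $YTY^{-1}=\bigoplus_n T|\ker\theta_n(T)$ follows since $T$ commutes with each $\phi_{\{n\}}(T)$ and preserves each $\ker\theta_n(T)$.

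No serious obstacle arises: Lemma \ref{group} carries the algebraic weight, Dixmier's theorem converts boundedness of $G$ into the self-adjointness of the projections $P_n$, and Lemma \ref{l-lcmspan} supplies the spanning property needed for the direct-sum decomposition. The only point requiring care is extracting the claimed bound $(2C+1)^2$ rather than a worse power of $2C+1$, and this falls out cleanly from the Pythagorean identity for the mutually orthogonal $\{P_n\}$.
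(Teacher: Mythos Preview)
Your proof is correct and follows precisely the approach the paper sketches: apply Dixmier's theorem to the bounded abelian group of Lemma \ref{group}, deduce that the conjugated idempotents $X\phi_{\{n\}}(T)X^{-1}$ are mutually orthogonal projections summing to $I$, and read off the norm estimate for $Y$ via Pythagoras. The paper itself gives no details here, merely citing Proposition 3.2 of \cite{clouatre}; you have supplied those details accurately, including the verification that $\phi_{\{n\}}(T)$ is an idempotent with range exactly $\ker\theta_n(T)$ and the use of Lemma \ref{l-lcmspan} (which applies because the generalized Carleson condition forces the $\theta_n$ to be pairwise coprime, so their least common inner multiple is $\theta$).
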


\section{A similarity result}
In this section we apply Theorem \ref{t-simalg} and Theorem \ref{directsum} to similarity questions for multiplicity-free operators of class $C_0$. Let us describe the precise setting for the result we wish to prove. 
We assume that the minimal function is a Blaschke product $\theta$ which can be written as $\theta=\prod_n \theta_n$, where $\{\theta_n\}_n\subset H^\infty$ is a sequence of finite Blaschke products with at most $N$ roots satisfying the generalized Carleson condition. 

A particular case of this situation is that where $\theta_n=b_{\lambda_n}^N$ and $\{\lambda_n\}_n \subset \D$ is a sequence satisfying the Carleson condition (see \cite{nikOP} for details). This is the case covered by the main result of \cite{clouatre}, stated below.

\begin{theorem}\label{similar2}
Let $\{\lambda_n\}_n\subset \D$ be a
sequence satisfying the Carleson condition and $\{m_n\}_n\subset \N$ be a bounded sequence. Let $T\in B(\hil)$ be
a multiplicity-free operator of class $C_0$ with minimal function 
$$
\theta(z)=\prod_{n}\left(\frac{\ol{\lambda_n}}{|\lambda_n|}\frac{z-\lambda_n}{1-\ol{\lambda_n}z} \right)^{m_n}.
$$ 
Assume that $\phi(T)$ has closed range for every inner divisor $\phi$ of $\theta$. Then, $T$ is similar to $S(\theta)$.
\end{theorem}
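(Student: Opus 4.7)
The plan is to combine the decomposition from Theorem \ref{directsum} with the norm-controlled similarities from Corollary \ref{c-simalg} and Corollary \ref{c-dim2}. Writing $\theta_n=(\ol{\lambda_n}/|\lambda_n|)^{m_n}b_{\lambda_n}^{m_n}$, the Carleson condition on $\{\lambda_n\}_n$ together with the boundedness of $\{m_n\}_n$ (say $m_n\leq M$) implies, via Lemma 3.2.18 of \cite{nikOP}, that $\{\theta_n\}_n$ satisfies the generalized Carleson condition. Theorem \ref{directsum} applied to $T$ yields an invertible $Y_1$ with $Y_1TY_1^{-1}=\bigoplus_n T_n$, where $T_n=T|\ker\theta_n(T)$, while the same theorem applied to $S(\theta)$, combined with Lemma \ref{l-unitequiv} which identifies $S(\theta)|\ker\theta_n(S(\theta))$ with $S(\theta_n)$, gives an invertible $Y_2$ with $Y_2S(\theta)Y_2^{-1}$ unitarily equivalent to $\bigoplus_n S(\theta_n)$; both $Y_1$ and $Y_2$ have norm and inverse-norm bounded uniformly in terms of the generalized Carleson constant. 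It therefore suffices to produce invertibles $X_n:\ker\theta_n(T)\to H(\theta_n)$ satisfying $X_nT_n=S(\theta_n)X_n$ and $\sup_n\max\{\|X_n\|,\|X_n^{-1}\|\}<\infty$, and then to set $X=Y_2^{-1}\bigl(\bigoplus_n X_n\bigr)Y_1$.

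Each $T_n$ is multiplicity-free of class $C_0$ with minimal function $b_{\lambda_n}^{m_n}$, all of whose roots coincide; thus the constant $\eta$ attached to $\theta_n$ in Corollary \ref{c-simalg} is equal to zero. The closed-range hypothesis descends to each invariant subspace $\ker\theta_n(T)$: for $\phi$ an inner divisor of $\theta_n$ one has $\phi(T_n)=\phi(T)|\ker\theta_n(T)$, so closed range with a given constant is inherited by the restriction. From these quantitative closed-range bounds, together with the corona-type identities underlying the generalized Carleson condition (exactly those exploited in Lemma \ref{group}), I would extract a uniform comparison $\|u\|_{H^\infty/\theta_nH^\infty}\leq K\|u(T_n)\|$ valid for all $u\in H^\infty$, with $K$ independent of $n$. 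Since the contractive functional calculus already yields the reverse inequality with constant one, this shows that the natural algebra homomorphism $\Psi_n:H^\infty(T_n)\to H^\infty(S(\theta_n))$ sending $u(T_n)$ to $u(S(\theta_n))$ is a bounded isomorphism with $\|\Psi_n\|\leq K$ uniformly in $n$.

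Once this is in place, for each $n$ one applies Corollary \ref{c-simalg} with $\eta=0$ to obtain $X_n$ when $m_n\geq 3$, Corollary \ref{c-dim2} when $m_n=2$ (which is free of any norm threshold), and the trivial assignment when $m_n=1$. The uniform bound $\sup_n\max\{\|X_n\|,\|X_n^{-1}\|\}<\infty$ then follows from the uniform control on $\|\Psi_n\|$ and the finite range of $m_n$. The main obstacle is precisely the previous paragraph: ensuring that the bound $K$ obtained from the closed-range hypothesis is strictly smaller than the threshold $(1-1/(m_n-1)^2)^{-1/2}$ demanded by Corollary \ref{c-simalg}, which for $m_n$ as large as $M$ forces $K$ to be very close to $1$. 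Achieving this quantitative tightening, essentially a quantitative version of the Apostol-type criterion used in Proposition 3.2 of \cite{clouatre}, is the delicate part of the argument; it should proceed by combining the explicit closed-range constants of $\phi(T)$ with the corona data $f_A,g_A$ from the generalized Carleson condition that already appear in the proof of Theorem \ref{directsum}.
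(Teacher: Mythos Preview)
The paper does not prove Theorem \ref{similar2}; it is quoted verbatim as the main result of \cite{clouatre}, and the proof there proceeds by different means (Apostol's closed-range criterion and a direct construction of the intertwining operator, not through the algebra-isomorphism estimates of Section 3). So there is no proof in the present paper to compare your attempt against.

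That said, your sketch contains a genuine gap, and it is precisely the one you flag in your last paragraph. The threshold $\|\Psi_n\|<(1-1/(m_n-1)^2)^{-1/2}$ in Corollary \ref{c-simalg} is not a technicality that a sharper corona estimate will remove: it is built into the proof of Theorem \ref{t-simalg}, where the near-orthogonality of the basis $\{\alpha_k(T)\xi\}$ is obtained only when $\beta$ is so close to $1$ that $(N-1)\sqrt{1-\beta^2}<1$. The closed-range hypothesis on $\phi(T)$ supplies merely \emph{some} positive lower bound for $\|\phi(T)h\|$ on $(\ker\phi(T))^{\perp}$, with no reason whatsoever for that bound to be close to $1$, and no stated uniformity in $\phi$. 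Combining it with the corona data $f_A,g_A$ cannot manufacture such closeness, since those functions have norms controlled by the Carleson constant $C$, which is typically large. The concluding paragraph of the paper makes the limitation explicit: the case $\eta=0$ is declared to be ``the case covered by Theorem \ref{similar2}'', i.e.\ by \cite{clouatre}, rather than by Theorem \ref{t-similargc}. Your route would at best recover Theorem \ref{similar2} under the additional, much stronger hypothesis that $\sup_n\|\Psi_n\|$ is close to $1$, which is neither assumed nor implied by closed range.
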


We would now like to tackle the case where the functions $\theta_n$ might have distinct roots.  Such functions have been studied in the context of interpolation (see \cite{hart1}, \cite{hart2},\cite{NikShift} and \cite{Vas2}). 
Before proceeding, we make an addition to Theorem \ref{equivalence2}. We hope that it can shed some light on the condition that will appear in our similarity result, especially with regard to the assumption of Theorem \ref{similar2}.

\begin{theorem}\label{t-equivalencecarleson}
Let $\{\lambda_n\}_n\subset \D$ be a sequence of distinct points and let $\theta \in H^\infty$ be the corresponding Blaschke product. The following statements are
equivalent:
\begin{enumerate}
\item[\rm{(i)}] $\{\lambda_n\}_n\subset \D$ satisfies the Carleson condition
\item[\rm{(ii)}] every multiplicity-free operator $T$ of class $C_0$ with minimal function $\theta$ is similar to $S(\theta)$
\item[\rm{(iii)}] $\phi(T)$ has closed range for every multiplicity-free operator $T$ of class $C_0$ with minimal function $\theta$ and every inner divisor $\phi$ of $\theta$
\item[\rm{(iv)}] for every multiplicity-free operator $T$ of class $C_0$ with minimal function $\theta$, there exists a constant $\beta>0$ such that
$$
\|u(T)|\ker \phi(T)\|\geq \beta \|u\|_{H^\infty/\phi H^\infty}
$$
for every $u\in H^\infty$ and every inner divisor $\phi$ of $\theta$.
\end{enumerate}
\end{theorem}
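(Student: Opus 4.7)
The equivalence (i) $\Leftrightarrow$ (ii) is precisely Theorem \ref{equivalence2}, so the task reduces to inserting (iii) and (iv) into this chain of equivalences. My plan is to establish (ii) $\Rightarrow$ (iii), (ii) $\Rightarrow$ (iv), (iii) $\Rightarrow$ (ii), and (iv) $\Rightarrow$ (i), with (i) $\Rightarrow$ (ii) then closing the loop through Theorem \ref{equivalence2}.

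The two direct implications out of (ii) are straightforward. Given an invertible intertwiner $X:\hil\to H(\theta)$ with $XT=S(\theta)X$, one has $\phi(T)=X^{-1}\phi(S(\theta))X$ for every inner divisor $\phi$ of $\theta$. A short calculation identifies the range of $\phi(S(\theta))$ with $\phi H(\theta/\phi)=H(\theta)\cap \phi H^2$, which is closed, so $\phi(T)$ has closed range and (iii) follows. For (iv), the same $X$ restricts to an invertible map $X_\phi:\ker\phi(T)\to \ker\phi(S(\theta))$ intertwining $T|\ker\phi(T)$ with $S(\theta)|\ker\phi(S(\theta))$; by Lemma \ref{l-unitequiv} the latter is unitarily equivalent to $S(\phi)$, whose functional calculus is isometric on $H^\infty/\phi H^\infty$, and this yields the lower bound with constant $\beta=(\|X\|\|X^{-1}\|)^{-1}$.

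For (iii) $\Rightarrow$ (ii) I would invoke the classical similarity criterion for multiplicity-free $C_0$ operators: closed range of every $\phi(T)$, combined with the quasisimilarity $T\sim_q S(\theta)$ provided by Theorem \ref{classifC0}, is enough to upgrade quasisimilarity to similarity (see \cite{bercOTA}). For (iv) $\Rightarrow$ (i) my plan is to derive the generalized Carleson condition for the sequence of simple Blaschke factors $\{b_{\lambda_n}\}$, which coincides with the classical Carleson condition (i) by Lemma 3.2.18 of \cite{nikOP}. Concretely, for each finite $A\subset \N$ set $\theta_A=\prod_{n\in A}b_{\lambda_n}$; applying the lower bound in (iv) with $\phi=\theta_A$ shows that the algebra $H^\infty(T|\ker\theta_A(T))$ is boundedly isomorphic to $H^\infty(S(\theta_A))$, and Corollary \ref{c-simalg} then promotes this to a similarity $T|\ker\theta_A(T)\sim S(\theta_A)$. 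The point is to extract from this family of similarities uniform corona data $f_A,g_A\in H^\infty$ satisfying $f_A\theta_A+g_A(\theta/\theta_A)=1$ with $\|f_A\|_{H^\infty},\|g_A\|_{H^\infty}\leq C$, which is exactly the generalized Carleson condition on $\{b_{\lambda_n}\}$.

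The main obstacle is (iv) $\Rightarrow$ (i). The difficulty is that the constant $C(\Psi,N)$ in Corollary \ref{c-simalg} depends on $N=|A|$, so a naive application does not produce the $A$-uniform corona estimates needed to conclude generalized Carleson. Overcoming this will require either sharpening the angular estimates of Proposition \ref{p-dilationangle} under the cumulative hypotheses of (iv) to extract $N$-independent bounds, or a structural bypass using the group-theoretic machinery of Lemma \ref{group} and Theorem \ref{directsum}, arguing directly with the restrictions $T|\ker\theta_A(T)$ and their orthogonal counterparts $T|\ker(\theta/\theta_A)(T)$ to obtain uniform control on the projection onto each.
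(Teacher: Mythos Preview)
Your treatment of (ii) $\Rightarrow$ (iv) matches the paper's exactly, and for the equivalence of (i), (ii), (iii) the paper simply cites \cite{clouatre} rather than arguing as you do; your sketches there are reasonable, though the appeal to a ``classical similarity criterion'' in \cite{bercOTA} for (iii) $\Rightarrow$ (ii) is not something you will find stated in that generality---this implication really does go through \cite{clouatre}.

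The genuine gap is (iv) $\Rightarrow$ (i). You correctly diagnose that your route via Corollary \ref{c-simalg} cannot close: the similarity constant there depends on $N=|A|$, and neither the angular estimates of Proposition \ref{p-dilationangle} nor the group machinery of Lemma \ref{group} will produce the $A$-uniform corona data you are after. But the obstacle is self-inflicted. You are trying to deduce a property of the sequence $\{\lambda_n\}$ from similarities of \emph{arbitrary} $T$, when the hypothesis (iv) is universally quantified and therefore lets you pick a single convenient $T$.

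The paper exploits exactly this. Take
\[
T=\bigoplus_{n} S(b_{\lambda_n})=\bigoplus_{n}\lambda_n,
\]
a multiplicity-free $C_0$ operator with minimal function $\theta$. With $\psi_n=\theta/b_{\lambda_n}$ one has
\[
\psi_n(T)=0\oplus\cdots\oplus\psi_n(\lambda_n)\oplus 0\oplus\cdots,
\]
so $\|\psi_n(T)\|=|\psi_n(\lambda_n)|$. Applying (iv) with $\phi=\theta$ (so that $\ker\phi(T)=\hil$) and the fact that $\|\psi_n\|_{H^\infty/\theta H^\infty}=1$ gives
\[
|\psi_n(\lambda_n)|=\|\psi_n(T)\|\geq\beta
\]
for every $n$, which is precisely the Carleson condition. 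No corona solutions, no Corollary \ref{c-simalg}, no uniformity to chase. The moral: when a hypothesis ranges over all operators, look for one test operator that makes the conclusion fall out directly.
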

\begin{proof}
The equivalence of (i), (ii) and (iii) is from \cite{clouatre}. To see that (ii) implies (iv), assume $XTX^{-1}=S(\theta)$. By Lemma \ref{l-unitequiv}, we know that $u(S(\theta))|\ker \phi(S(\theta))$ is unitarily equivalent to $u(S(\phi))$ for every $u\in H^\infty$ and every inner divisor $\phi$ of $\theta$, so that
$$
\|u(S(\theta))|\ker \phi(S(\theta))\|=\|u(S(\phi))\|=\|u\|_{H^\infty/\phi H^\infty}.
$$ 
Note moreover that if we set $Y_{\phi}=X|\ker \phi(T):\ker \phi(T)\to \ker \phi(S(\theta))$, then
$$
Y_{\phi}u(T)|\ker \phi(T)Y_{\phi}^{-1}=u(S(\theta))|\ker \phi(S(\theta))
$$
and therefore
$$
\|u(T)|\ker \phi(T)\|\geq \frac{1}{\|Y_{\phi}\| \|Y_{\phi}^{-1}\|}\|u\|_{H^\infty/\phi H^\infty}\geq \frac{1}{\|X\| \|X^{-1}\|}\|u\|_{H^\infty/\phi H^\infty}
$$
which is (iv).
To finish the proof, it suffices to show that (iv) implies (i). Let $\psi_n=\theta/b_{\lambda_n}$. By definition, we need to show that
$|\psi_n(\lambda_n)|\geq \beta$ for some $\beta>0$. Arguing as in the proof of Corollary \ref{c-simalg}, we see that
$$
\|\psi_n\|_{H^\infty/\theta H^\infty}=1.
$$
If we let 
$$
T=\bigoplus_{n}S(b_{\lambda_n})=\bigoplus_{n}\lambda_n
$$
then
$$
\psi_n(T)=0\oplus 0\oplus \ldots \oplus \psi_n(\lambda_n)\oplus 0 \oplus \ldots
$$
Using (iv), we find
$$
|\psi_n(\lambda_n)|=\|\psi_n(T)\|\geq \beta \|\psi_n\|_{H^\infty/\theta H^\infty} \geq \beta
$$
and we are done.
\end{proof}

We can now formulate our similarity result.

\begin{theorem}\label{t-similargc}
Let $\theta\in H^\infty$ be a Blaschke product which can be written as $\theta=\prod_n \theta_n$, where $\{\theta_n\}_n\subset H^\infty$ is a sequence of finite Blaschke products with at most $N$ roots satisfying the generalized Carleson condition. Define
$$
\eta=\sup_{n}\sup_{\lambda,\mu\in \theta_n^{-1}(0)}\frac{|b_{\lambda}(\mu)|^{1/2}}{(1-\max \{|\lambda|,|\mu| \}^2)^{1/2}}.
$$
Let $T\in B(\hil)$ be a multiplicity-free operator of class $C_0$ with minimal function $\theta$. Assume that there exists a constant $\beta$ such that
$$
\sqrt{1-\frac{1}{(N-1)^2}}< \beta-5\sqrt{2}\eta<1
$$
and
$$
\|u(T)|\ker \theta_n(T)\|\geq \beta \|u\|_{H^\infty/\theta_n H^\infty}
$$
for every $u\in H^\infty$ and every $n$.
Then, $T$ is similar to $S(\theta)$.
\end{theorem}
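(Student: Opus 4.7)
The plan is to reduce to the finite-Blaschke setting of Corollary \ref{c-simalg} on each summand of a generalized Carleson decomposition, and then glue the pieces back together. Since $\{\theta_n\}_n$ satisfies the generalized Carleson condition, Theorem \ref{directsum} applied to $T$ gives an invertible operator implementing the similarity $T\sim \bigoplus_n T_n$, where $T_n:=T|\ker\theta_n(T)$. Applied to $S(\theta)$, the same theorem combined with Lemma \ref{l-unitequiv} (which identifies $S(\theta)|\ker\theta_n(S(\theta))$ with $S(\theta_n)$ up to unitary equivalence) yields $S(\theta)\sim \bigoplus_n S(\theta_n)$. It therefore suffices to construct invertible intertwiners $X_n\colon \ker\theta_n(T)\to H(\theta_n)$ satisfying $X_n T_n = S(\theta_n) X_n$ and $\max\{\|X_n\|,\|X_n^{-1}\|\}\leq C$ for a constant $C$ independent of $n$, and then set $X=\bigoplus_n X_n$.

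For each $n$, the operator $T_n$ is a multiplicity-free $C_0$ operator whose minimal function is $\theta_n$, a finite Blaschke product of degree $N_n\leq N$. I would define $\Psi_n\colon H^\infty(T_n)\to H^\infty(S(\theta_n))$ by $\Psi_n(u(T_n))=u(S(\theta_n))$; this is a well-defined algebra isomorphism with the intertwining property, because both operators share the minimal function $\theta_n$. The contractive functional calculus gives $\|u(T_n)\|\leq \|u\|_{H^\infty/\theta_n H^\infty}=\|u(S(\theta_n))\|$, while the hypothesis of the theorem translates into $\|\Psi_n\|\leq 1/\beta$. Writing $\eta_n$ for the quantity appearing in Corollary \ref{c-simalg} computed from the roots of $\theta_n$ alone, we have $\eta_n\leq \eta$, hence
\[
\frac{1}{\|\Psi_n\|}-5\sqrt{2}\,\eta_n \;\geq\; \beta-5\sqrt{2}\,\eta \;>\; \sqrt{1-\tfrac{1}{(N-1)^2}} \;\geq\; \sqrt{1-\tfrac{1}{(N_n-1)^2}},
\]
so Corollary \ref{c-simalg} produces the required $X_n$.

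The crucial point, which I expect to be the main obstacle, is verifying that the constant produced by Corollary \ref{c-simalg} is uniform in $n$. This reduces to inspecting the explicit expressions for $\|X\|^2$ and $\|X^{-1}\|^2$ at the end of the proof of Theorem \ref{t-simalg} and observing that the resulting bound depends only on $\beta$ and $N$, not on the specific $\theta_n$. Once this uniformity is secured, $X=\bigoplus_n X_n$ is bounded and boundedly invertible, and composing with the similarities supplied by Theorem \ref{directsum} gives the desired similarity of $T$ with $S(\theta)$. Along the way I would need to check that $T_n$ inherits multiplicity-freeness from $T$ (a standard fact for restrictions to hyperinvariant subspaces of the form $\ker\phi(T)$ in $C_0$ theory), handle the harmless degenerate cases $N_n\in\{1,2\}$ (where the threshold $\sqrt{1-1/(N_n-1)^2}$ is undefined or zero) via elementary arguments or Corollary \ref{c-dim2}, and if the strict upper bound $\beta'_n<1$ fails at the boundary (which requires $\|\Psi_n\|=1$ and $\eta_n=0$ simultaneously), perturb $\beta'_n$ slightly downward, using that the constant in Corollary \ref{c-simalg} varies continuously in its arguments.
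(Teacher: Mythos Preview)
Your proposal is correct and follows essentially the same route as the paper: decompose both $T$ and $S(\theta)$ via Theorem \ref{directsum} and Lemma \ref{l-unitequiv}, then apply Corollary \ref{c-simalg} blockwise with a uniform constant coming from the explicit bounds in Theorem \ref{t-simalg}. Your treatment is in fact slightly more careful than the paper's, which glosses over the multiplicity-freeness of $T_n$, the degenerate cases $N_n\leq 2$, and the strict-inequality boundary issue you flag.
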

\begin{proof}
By Theorem \ref{directsum}, we have that $T$ is similar to 
$$
\bigoplus_{n}T|\ker \theta_n(T)
$$
and that $S(\theta)$ is similar to 
$$
\bigoplus_{n}S(\theta)|\ker \theta_n(T).
$$
Using Lemma \ref{l-unitequiv}, we see that $S(\theta)$ is actually similar to $\bigoplus_{n}S(\theta_n)$. Therefore, it is sufficient to show that for each $n$ the operator $T|\ker \theta_n(T)$ is similar to $S(\theta_n)$ via an invertible operator 
$$
X_n:\ker \theta_n(T)\to H(\theta_n)
$$
satisfying 
$$
\sup_{n} \{\|X_n\|,\|X_n^{-1}\| \}<\infty.
$$
But this follows from Corollary \ref{c-simalg}. Indeed, for each $n$ we can define 
$$
\psi_n=\theta_n/b_{\lambda_n}
$$
for some $\lambda_n\in \theta_n^{-1}(0)$. Then by assumption we have
$$
\|\psi_n(T)|\ker \theta_n(T)\|\geq \beta\|\psi_n\|_{H^\infty/\theta_n H^\infty}
$$
and as in the proof of Corollary \ref{c-simalg}, we see that
$$
\|\psi_n\|_{H^\infty/\theta H^\infty}=1.
$$
When combined, these inequalities yield
$$
\|\psi_n(T)|\ker \theta_n(T)\|\geq \beta.
$$
In view of our assumption on $\eta(\theta)$, we can apply Corollary \ref{c-simalg} to finish the proof.
\end{proof}
The reader should note at this point that the inequalities appearing in the previous theorem implicitely force $\eta\leq(5\sqrt{2})^{-1}$ since obviously $0<\beta\leq 1$. 

If we restrict our attention to the special case where each $\theta_n$ has two roots, then we obtain a simpler statement.

\begin{corollary}\label{c-similargc}
Let $\theta\in H^\infty$ be a Blaschke product which can be written as $\theta=\prod_n \theta_n$, where $\{\theta_n\}_n\subset H^\infty$ is a sequence of finite Blaschke products with at most two roots satisfying the generalized Carleson condition. Let $T\in B(\hil)$ be a multiplicity-free operator of class $C_0$ with minimal function $\theta$. 
Assume that there exists a constant $\beta$ such that
$$
\|u(T)|\ker \theta_n(T)\|\geq \beta \|u\|_{H^\infty/\theta_n H^\infty}
$$
for every $u\in H^\infty$ and every $n$. Then, $T$ is similar to $S(\theta)$.
\end{corollary}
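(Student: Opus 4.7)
The strategy is to reduce, via Theorem~\ref{directsum}, to applying Corollary~\ref{c-dim2} componentwise. Theorem~\ref{directsum} furnishes an invertible $Y\in B(\hil)$, with norm and inverse norm controlled by the generalized Carleson constant, such that $YTY^{-1}=\bigoplus_n T|\ker\theta_n(T)$. Since $S(\theta)$ is itself a multiplicity-free $C_0$ operator with minimal function $\theta$, Theorem~\ref{directsum} applied to $S(\theta)$ together with Lemma~\ref{l-unitequiv} shows that $S(\theta)$ is similar, with the same kind of norm control, to $\bigoplus_n S(\theta_n)$. It therefore suffices to produce, for each $n$, an invertible intertwiner $X_n:\ker\theta_n(T)\to H(\theta_n)$ of $T|\ker\theta_n(T)$ with $S(\theta_n)$ satisfying $\sup_n\max\{\|X_n\|,\|X_n^{-1}\|\}<\infty$; then $X=\bigoplus_n X_n$, composed with the outer similarities, delivers the claim.

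The next step is to check that, for each fixed $n$, the restriction $R_n:=T|\ker\theta_n(T)$ is a multiplicity-free $C_0$ operator with minimal function exactly $\theta_n$, and that the hypothesis encodes a uniformly bounded algebra isomorphism. The minimal function of $R_n$ divides $\theta_n$, and equality is forced by the lower bound: for any proper inner divisor $\phi$ of $\theta_n$ we have $\|\phi\|_{H^\infty/\theta_n H^\infty}>0$, so $\|\phi(R_n)\|\geq\beta\|\phi\|_{H^\infty/\theta_n H^\infty}>0$. Because $T$ is multiplicity-free, standard consequences of the classification of Jordan models give that $\dim\ker\theta_n(T)$ equals the degree of $\theta_n$, so $R_n$ acts on a space of dimension at most two with a minimal function of matching degree, hence is multiplicity-free. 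The two-sided inequality $\beta\|u\|_{H^\infty/\theta_n H^\infty}\leq\|u(R_n)\|\leq\|u\|_{H^\infty/\theta_n H^\infty}$, where the upper bound is the contractive Sz.-Nagy--Foias calculus, shows that $\Psi_n(u(R_n)):=u(S(\theta_n))$ defines a bounded algebra isomorphism $\Psi_n:H^\infty(R_n)\to H^\infty(S(\theta_n))$ with $\|\Psi_n\|\leq 1/\beta$, uniformly in $n$.

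Finally, for each $n$ with $\deg\theta_n=2$ I apply Corollary~\ref{c-dim2} to $R_n$ and $\Psi_n$ to obtain an intertwining invertible $X_n$ with $\max\{\|X_n\|,\|X_n^{-1}\|\}\leq C(\Psi_n)$; the case $\deg\theta_n=1$ is trivial, since both $R_n$ and $S(\theta_n)$ are multiplication by the same scalar on a one-dimensional space. Inspection of the proof of Corollary~\ref{c-dim2} shows that $C(\Psi_n)$ is determined by $\|\Psi_n\|$ alone, and is therefore uniformly bounded in $n$ by a constant depending only on $\beta$; assembling $X=\bigoplus_n X_n$ and pre/post-composing with the similarities from Theorem~\ref{directsum} and Lemma~\ref{l-unitequiv} finishes the proof. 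The step I expect to be the most delicate is verifying that $R_n$ inherits multiplicity-freeness from $T$ with minimal function exactly $\theta_n$; once this structural point is settled, the two-dimensional bounded-isomorphism machinery of Corollary~\ref{c-dim2} and the uniform control from Theorem~\ref{directsum} combine routinely.
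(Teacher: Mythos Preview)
Your argument is correct and follows essentially the same route as the paper: reduce via Theorem~\ref{directsum} and Lemma~\ref{l-unitequiv} to componentwise similarity, then apply Corollary~\ref{c-dim2} to each $R_n=T|\ker\theta_n(T)$ with the uniformly bounded isomorphism $\Psi_n$ coming from the hypothesis. You have simply made explicit what the paper leaves implicit (that $R_n$ is multiplicity-free with minimal function exactly $\theta_n$, and that the constant in Corollary~\ref{c-dim2} depends only on $\|\Psi_n\|$), but the overall structure is identical.
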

\begin{proof}
Proceed as in the proof of Theorem \ref{t-similargc}, but use Corollary \ref{c-dim2} instead of Corollary \ref{c-simalg}.
\end{proof}

Let us close by making a few comments about $\eta$. For that purpose, let us introduce another quantity related to $\theta$,
$$
\delta=\inf_{n}\inf_{\lambda,\mu\in \theta_n^{-1}(0)}|b_{\lambda}(\mu)|^{1/2}.
$$
If $\delta>0$, then a similarity result analogous to Theorem \ref{t-similargc} follows immediately from Theorem \ref{directsum} (as was pointed out in the proof of Corollary \ref{c-dim2}). On the other hand, if $\eta=0$ then we are in the case covered by Theorem \ref{similar2}. Now, it is obvious that
$
0\leq \delta\leq \eta.
$
This shows that our (implicit) condition $\eta\leq(5\sqrt{2})^{-1}$ closes part of the gap between those two cases where similarity was known previously.

\begin{bibdiv}
\begin{biblist}

\bib{apostol}{article}{
   author={Apostol, Constantin},
   title={Inner derivations with closed range},
   journal={Rev. Roumaine Math. Pures Appl.},
   volume={21},
   date={1976},
   number={3},
   pages={249--265},
   issn={0035-3965},
   review={\MR{0410459 (53 \#14208)}},
}

\bib{arveson}{article}{
   author={Arveson, William B.},
   title={Subalgebras of $C^{\ast} $-algebras},
   journal={Acta Math.},
   volume={123},
   date={1969},
   pages={141--224},
   issn={0001-5962},
   review={\MR{0253059 (40 \#6274)}},
}

\bib{bercOTA}{book}{
   author={Bercovici, Hari},
   title={Operator theory and arithmetic in $H^\infty$},
   series={Mathematical Surveys and Monographs},
   volume={26},
   publisher={American Mathematical Society},
   place={Providence, RI},
   date={1988},
   pages={xii+275},
   isbn={0-8218-1528-8},
   review={\MR{954383 (90e:47001)}},
}

\bib{cima}{article}{
   author={Cima, Joseph A.},
   author={Garcia, Stephan Ramon},
   author={Ross, William T.},
   author={Wogen, Warren R.},
   title={Truncated Toeplitz operators: spatial isomorphism, unitary
   equivalence, and similarity},
   journal={Indiana Univ. Math. J.},
   volume={59},
   date={2010},
   number={2},
   pages={595--620},
   issn={0022-2518},
   review={\MR{2648079 (2011i:47035)}},
}

\bib{clouatre}{article}{
   author={Clou{\^a}tre, Rapha{\"e}l},
   title={Similarity results for operators of class $C_0$},
   journal={Integral Equations Operator Theory},
   volume={71},
   date={2011},
   number={4},
   pages={557--573},
   issn={0378-620X},
   review={\MR{2854865}},
   doi={10.1007/s00020-011-1916-x},
}

\bib{dixmier}{article}{
   author={Dixmier, Jacques},
   title={Les moyennes invariantes dans les semi-groupes et leurs
   applications},
   language={French},
   journal={Acta Sci. Math. Szeged},
   volume={12},
   date={1950},
   number={Leopoldo Fejer et Frederico Riesz LXX annos natis dedicatus, Pars
   A},
   pages={213--227},
   issn={0001-6969},
   review={\MR{0037470 (12,267a)}},
}

\bib{garcia}{article}{
   author={Garcia, Stephan Ramon},
   author={Poore, Daniel E.},
   author={Ross, William T.},
   title={Unitary equivalence to a truncated Toeplitz operator: analytic
   symbols},
   journal={Proc. Amer. Math. Soc.},
   volume={140},
   date={2012},
   number={4},
   pages={1281--1295},
   issn={0002-9939},
   review={\MR{2869112}},
   doi={10.1090/S0002-9939-2011-11060-8},
}

\bib{hart1}{article}{
   author={Hartmann, Andreas},
   title={Une approche de l'interpolation libre g\'en\'eralis\'ee par la
   th\'eorie des op\'erateurs et caract\'erisation des traces $H^p\vert
   _\Lambda$},
   language={French, with English summary},
   journal={J. Operator Theory},
   volume={35},
   date={1996},
   number={2},
   pages={281--316},
   issn={0379-4024},
   review={\MR{1401691 (97k:47013)}},
}
\bib{hart2}{article}{
   author={Hartmann, Andreas},
   title={Traces of certain classes of holomorphic functions on finite
   unions of Carleson sequences},
   journal={Glasg. Math. J.},
   volume={41},
   date={1999},
   number={1},
   pages={103--114},
   issn={0017-0895},
   review={\MR{1689679 (2000e:46027)}},
   doi={10.1017/S0017089599970507},
}
\bib{NF2}{book}{
   author={Sz.-Nagy, B{\'e}la},
   author={Foias, Ciprian},
   author={Bercovici, Hari},
   author={K{\'e}rchy, L{\'a}szl{\'o}},
   title={Harmonic analysis of operators on Hilbert space},
   series={Universitext},
   edition={2},
   edition={Revised and enlarged edition},
   publisher={Springer},
   place={New York},
   date={2010},
   pages={xiv+474},
   isbn={978-1-4419-6093-1},
   review={\MR{2760647 (2012b:47001)}},
   doi={10.1007/978-1-4419-6094-8},
}

\bib{nikOP}{book}{
   author={Nikolski, Nikolai K.},
   title={Operators, functions, and systems: an easy reading. Vol. 2},
   series={Mathematical Surveys and Monographs},
   volume={93},
   note={Model operators and systems;
   Translated from the French by Andreas Hartmann and revised by the
   author},
   publisher={American Mathematical Society},
   place={Providence, RI},
   date={2002},
   pages={xiv+439},
   isbn={0-8218-2876-2},
   review={\MR{1892647 (2003i:47001b)}},
}

\bib{NikVas}{collection}{
   title={Spectral theory of functions and operators},
   editor={Nikol{\cprime}ski{\u\i}, S. M.},
   note={A translation of Trudy Mat. Inst. Steklov. {\bf 130} (1978);
   Proc. Steklov Inst. Math. {\bf 1979}, no. 4},
   publisher={American Mathematical Society},
   place={Providence, R.I.},
   date={1979},
   pages={i--v and 1--233},
   review={\MR{583431 (81j:00010)}},
}

\bib{NikShift}{book}{
   author={Nikol{\cprime}ski{\u\i}, N. K.},
   title={Treatise on the shift operator},
   series={Grundlehren der Mathematischen Wissenschaften [Fundamental
   Principles of Mathematical Sciences]},
   volume={273},
   note={Spectral function theory;
   With an appendix by S. V. Hru\v s\v cev [S. V. Khrushch\"ev] and V. V.
   Peller;
   Translated from the Russian by Jaak Peetre},
   publisher={Springer-Verlag},
   place={Berlin},
   date={1986},
   pages={xii+491},
   isbn={3-540-15021-8},
   review={\MR{827223 (87i:47042)}},
}
\bib{paulsen}{book}{
   author={Paulsen, Vern},
   title={Completely bounded maps and operator algebras},
   series={Cambridge Studies in Advanced Mathematics},
   volume={78},
   publisher={Cambridge University Press},
   place={Cambridge},
   date={2002},
   pages={xii+300},
   isbn={0-521-81669-6},
   review={\MR{1976867 (2004c:46118)}},
}
\bib{Vas1}{article}{
   author={Vasjunin, V. I.},
   title={Unconditionally convergent spectral expansions, and nonclassical
   interpolation},
   language={Russian},
   journal={Dokl. Akad. Nauk SSSR},
   volume={227},
   date={1976},
   number={1},
   pages={11--14},
   issn={0002-3264},
   review={\MR{0402480 (53 \#6300)}},
}
\bib{Vas2}{article}{
   author={Vasjunin, V. I.},
   title={Unconditionally convergent spectral decompositions and
   interpolation problems},
   language={Russian},
   note={Spectral theory of functions and operators},
   journal={Trudy Mat. Inst. Steklov.},
   volume={130},
   date={1978},
   pages={5--49, 223},
   issn={0371-9685},
   review={\MR{505683 (80b:47041)}},
}

\end{biblist}
\end{bibdiv}

\end{document}